\numberwithin{equation}{section}
\newcommand{\Z}{\mathbb{Z}}
\newtheorem{thm}{Th\'eor\`eme}
\newtheorem{cor}{Corollaire}
\newtheorem{exemple}{Exemple}
\newtheorem{lem}{Lemme}
\newtheorem{prop}{Proposition}
\newtheorem{prob}{Probl\`eme}
\newtheorem*{ques}{Question}
\newcommand{\resp}{{\it resp. }}
\newcommand{\E}{{\mathcal E }}
\renewcommand{\phi}{\varphi}
\renewcommand{\cong}{\simeq}
\newcommand{\ind}{{\rm ind }}
\newcommand{\cd}{{\rm cd } }
\begin{document}

\title[]{Autour de la d\'ecomposition des alg\`ebres d'exposant 2 sur les extensions multiquadratiques}

\author{Demba Barry et Ahmed Laghribi}
\address{D\'{e}partement de Math\'{e}matique et Informatique , Universit\'{e} des Sciences, des Techniques et des technologies de Bamako, Colline de Badalabougou BP E3206, Bamako, Mali}
\email{barry.demba@gmail.com}
\thanks{Le premier auteur remercie Universit\'e d'Artois pour son financement et son hospitalit\'e (en 2018) durant une partie de ce travail}

\address{Univ. Artois, UR 2462, Laboratoire de Mathématiques de Lens (LML), F-62300 Lens, France}
\email{ahmed.laghribi@univ-artois.fr}

\date{08 avril, 2022}

\begin{sloppypar}

\begin{abstract}
Pour les alg\`ebres simples centrales d'exposant $2$ sur des corps de caract\'eristique $2$ et de $2$-dimension cohomologique \'egale \`a 2, nous \'etudions la notion de d\'ecomposition adapt\'ee \`a certaines extensions multiquadratiques du centre. Les r\'esultats obtenus \'etendent plusieurs propri\'et\'es remarquables aux extensions multiquadratiques de degr\'e de s\'eparabilit\'e au plus $4$. Nous \'etendons aussi \`a la caract\'eristique $2$ un r\'esultat de Elman-Lam-Tignol-Wadsworth en construisant une alg\`ebre d'exposant $2$ et de degr\'e $8$ contenant une extension triquadratique s\'eparable mais qui n'admet aucune d\'ecomposition adapt\'ee \`a cette extension. Comme application nous donnons une preuve \'el\'ementaire de la non excellence des extensions biquadratiques s\'eparables. 

\bigskip
\noindent \textbf{Mots cl\'es:} Alg\`ebre simple centrale, forme quadratique, extension multiquadratique, excellence, dimension cohomologique.

\medskip
\noindent \textbf{MSC:}  11E08, 11E81, 16K20, 13N05.
\end{abstract}

\maketitle

\section{Introduction}

Soit $F$ un corps commutatif. Une alg\`ebre de quaternions sur $F$ est une alg\`ebre de la forme
\[
[\alpha,\beta) = F[i,j: i^2+i=\alpha, j^2=\beta, jij^{-1}= i+1]
\]
pour certains $\alpha\in F$ et $\beta\in F^\times:=F\setminus \{0\}$ si la caract\'eristique $\operatorname{car}(F)$ de $F$ est $2$, et
\[
(\alpha,\beta) = F[i,j: i^2=\alpha, j^2=\beta, ij= -ji]
\] 
pour certains $\alpha,\beta\in F^\times$ si $\operatorname{car}(F)\ne2$.

Soit $A$ une $F$-alg\`ebre simple centrale d'exposant $2$. L'alg\`ebre est dite \emph{d\'ecomposable}  si $A\simeq A_1\otimes A_2$, o\`u $A_1$ et $A_2$ sont deux alg\`ebres simples centrales sur $F$ et toutes les deux non isomorphes \`a $F$. On dit que $A$ est \emph{totalement d\'ecomposable} si elle est isomorphe \`a un produit tensoriel d'alg\`ebres de quaternions sur $F$. Soient $K_1,\ldots, K_r$ des extensions quadratiques de $F$ contenues dans $A$ telles que $K=K_1\otimes \cdots\otimes K_r$ soit un corps. On dira que $A$ admet une \emph{d\'ecomposition adapt\'ee} \`a $K$ s'il existe des alg\`ebres de quaternions $Q_1,\ldots, Q_r$, avec $K_i\subset Q_i$, telles que $A\simeq Q_1\otimes \cdots\otimes Q_r\otimes A'$ pour une certaine sous-alg\`ebre $A'$. Dans ce papier, nous nous int\'eressons \`a la question de d\'ecomposition adapt\'ee dans le cas o\`u la caract\'eristique de $F$ est $2$ et la $2$-dimension cohomologique $\operatorname{cd}_2(F)$ de $F$ est \'egale \`a $2$.

Notons que la question de d\'ecomposition des alg\`ebres simples centrales d\'epend de la dimension cohomologique du corps de base. Si $\operatorname{cd}_2(F)>2$, des exemples d'alg\`ebres ind\'ecomposables d'exposant $2$ existent (voir par exemple~\cite{ART} si $\operatorname{car}(F)\ne2$, et~\cite{BCL} si $\operatorname{car}(F)=2$). En plus, soient $A$ une $F$-alg\`ebre simple centrale totalement d\'ecomposable de degr\'e au moins $8$ et $K/F$ une extension quadratique s\'eparable contenue dans $A$. Il est connu que $K$ n'est pas en g\'en\'eral dans une sous-alg\`ebre de quaternions de $A$ (voir~\cite[Corollary 4.5]{B16} si $\operatorname{car}(F)\ne2$, et~\cite[Example 3.8]{BCL} si $\operatorname{car}(F)=2$).

En revanche, supposons $\operatorname{cd}_2(F)\le 2$ et soit $A$ une $F$-alg\`ebre simple centrale d'exposant $2$ et de degr\'e $2^n$ ($n\geq 2$). Il est connu que $A$  est totalement d\'ecomposable, voir~\cite[Th\'eor\`eme 3]{Kah} si $\operatorname{car}(F)\ne2$ et~\cite[Theorem 4.1]{BC} si $\operatorname{car}(F)=2$. En outre, soit $K/F$ une extension s\'eparable quadratique ou biquadratique contenue dans $A$. Il est montr\'e dans~\cite[Theorem 3.3]{B}, si $\operatorname{car}(F)\ne 2$, et~\cite[Theorem 4.2]{BC}, si $\operatorname{car}(F)= 2$, que $A$ admet une d\'ecomposition adapt\'ee \`a $K$. Ce dernier r\'esultat n'est plus vrai pour les extensions triquadratiques. En effet, dans~\cite[Remark 5.8]{ELTW} il est construit en caract\'eristique diff\'erente de $2$ une alg\`ebre $A$ de degr\'e $8$ et d'exposant $2$ de centre $F$, avec $\operatorname{cd}_2(F)\le 2$, contenant une extension triquadratique  $M$ de $F$ telle que $A$ n'admette aucune  d\'ecomposition adapt\'ee \`a $M$. Dans cet article, nous \'etendons cet exemple \`a la caract\'eristique $2$.

\medskip
 
Dans tout le reste de cet article, nous supposons que le corps de base $F$ est de caract\'eristique $2$. Notre but est de prouver le r\'esultat suivant:

\begin{thm} Supposons que $\cd_2(F)\leq 2$. Soint $A$ une $F$-alg\`ebre centrale \`a division d'exposant $2$ et de degr\'e $2^n$ ($n\geq 2$). Soit $K$ une extension multiquadratique de $F$. Supposons qu'on ait l'une des conditions suivantes qui s'excluent mutuellement:\\(1) $K$ est de degr\'e de s\'eparabilit\'e $\leq 2$ avec $[K:F]=2^k \leq 2^n$ et $\ind A_K=2^{n-k}$.
\\(2) $K$ est de degr\'e de s\'eparabilit\'e $4$ telle que $[K:F]=2^n$ et $A_K$ soit d\'eploy\'ee.\\(3) $K$  est une extension triquadratique de degr\'e de s\'eparabilit\'e $4$, $n\geq 4$ et $\ind A_K=2^{n-3}$.\\Alors, $A$ admet une d\'ecomposition adapt\'ee \`a $K$.
\label{t1}
\end{thm}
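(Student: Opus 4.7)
The plan is to reduce each of the three cases to the known adapted-decomposition result~\cite[Theorem 4.2]{BC} for separable quadratic and biquadratic extensions, and to complete the argument by an induction on the number of purely inseparable quadratic generators of~$K$, each inductive step being controlled by the index hypothesis of the theorem.

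First I would write $K=K_s\cdot K_i$, where $K_s$ is the separable closure of $F$ in $K$ and $K_i$ is a purely inseparable multiquadratic complement. The hypotheses on the separability degree fix $[K_s:F]$: it is $\leq 2$ in case (1) and $=4$ in cases (2)--(3). Applying~\cite[Theorem 4.2]{BC} to $K_s$ produces an initial decomposition $A\simeq Q'_1\otimes\cdots\otimes Q'_t\otimes B$, with $t=[K_s:F]/2\in\{0,1,2\}$, in which each separable quadratic factor of $K_s$ lies in a distinct $Q'_j$. The residual algebra $B$ is central simple of degree $2^{n-t}$, and the problem becomes that of producing quaternions, sitting inside (a possibly modified) $B$, containing the inseparable factors $F(\sqrt{a_1}),\ldots,F(\sqrt{a_s})$ of $K_i$.

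The technical heart would be the following decomposition lemma: if $L=F(\sqrt{a})\subset C$ is a purely inseparable quadratic subfield of a central division $F$-algebra $C$ of degree $2^m$ with $\ind C_L=2^{m-1}$, then there exists a quaternion subalgebra $Q\subset C$ with $L\subset Q$ and $C\simeq Q\otimes_F C'$. To prove it, I would exploit total decomposability~\cite[Theorem 4.1]{BC}: starting from a presentation $C\simeq[\alpha_1,\beta_1)\otimes\cdots\otimes[\alpha_m,\beta_m)$ in the char~$2$ quaternion notation, locate a conjugate of $\sqrt{a}$ as a product of the $j$-generators and perform elementary transformations of the presentation to isolate $\sqrt{a}$ in a single factor $[\alpha,\beta)$ with $\beta\equiv a$ modulo norms. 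The hypothesis $\operatorname{cd}_2(F)\leq 2$ enters to guarantee that the resulting Brauer-class identities lift to genuine isomorphisms of algebras, not merely to Brauer equivalences.

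Armed with this lemma, I would conclude by induction on the number $s$ of purely inseparable quadratic generators of $K_i$: at each step, peel off a quaternion adapted to $F(\sqrt{a_j})$, verify that the index hypothesis is preserved under the corresponding quadratic extension, and check that the remaining inseparable subextension of~$K_i$ embeds into the new residual algebra. The three cases of the theorem then differ only in the starting value of $t$ and in the arithmetic bookkeeping for $\ind A_K$. The main obstacle will be the single-subfield decomposition lemma: unlike the separable situation, where an Artin--Schreier element already furnishes a separable maximal subfield of a quaternion, here the separable partner of $\sqrt{a}$ must be \emph{constructed} inside the ambient algebra, and it is precisely at this step that the cohomological hypothesis $\operatorname{cd}_2(F)\leq 2$, via the representability of $2$-torsion Brauer classes by quaternion symbols in characteristic~$2$, becomes indispensable.
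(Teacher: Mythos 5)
Your route is genuinely different from the paper's and, as written, it does not close. The paper works entirely on the quadratic-form side: under $\operatorname{cd}_2(F)\leq 2$ one writes $A=D_{\phi}$ for an anisotropic $\phi\in I^2_qF$ de dimension $2n+2$ (th\'eor\`eme \ref{propdec}), uses the excellence of $K/F$ for the forms of $I^2_qF$ (proposition \ref{p1}) to descend $(\phi_K)_{an}$ to a form $\phi'$ defined over $F$, then invokes the explicit computation of $\operatorname{Ker}(I^2_qF\to I^2_qK)$ for mixed multiquadratic extensions (corollaire \ref{c2}, issu de \cite{AL,ALO}) to write $\phi\perp\phi'\sim\sum_i\left<\left<a_i\right>\right>_b\otimes\phi_i+\sum_j\rho_j\otimes[1,b_j]$; passing to Clifford algebras and comparing degrees produces the quaternion factors containing each quadratic generator simultaneously. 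Nothing in your proposal replaces this kernel computation, which is where the content of the theorem lies.

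The concrete gaps are two. First, your ``decomposition lemma'' for a purely inseparable quadratic subfield --- which you correctly identify as the technical heart --- is not proved: ``locate a conjugate of $\sqrt{a}$ as a product of the $j$-generators and perform elementary transformations of the presentation'' is not an argument, since a given presentation of $C$ need not exhibit $\sqrt{a}$ in that form, and chains of elementary moves between tensor presentations are precisely what fail for multiquadratic subfields in general (this is the whole point of proposition \ref{pcex}). Any honest proof of that lemma under $\operatorname{cd}_2(F)\leq 2$ again needs $\operatorname{Ker}(I^2_qF\to I^2_qF(\sqrt{a}))$ together with the excellence of $F(\sqrt{a})/F$ (\cite{H}), i.e.\ the same machinery you are trying to avoid; for the purely inseparable part one could at least cite \cite[th\'eor\`emes 4 et 5]{MM95}, which you do not. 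Second, the reduction ``first $K_s$, then $K_i$'' is not justified: to apply \cite[Theorem 4.2]{BC} to $K_s$ you need $K_s\subset A$, i.e.\ $\operatorname{ind}A_{K_s}=2^n/[K_s:F]$, whereas in case (3) the hypothesis $\operatorname{ind}A_K=2^{n-3}$ a priori also allows $\operatorname{ind}A_{K_s}=2^{n-3}$, which you do not exclude; and after extracting $Q'_1\otimes Q'_2$ you must still show that each $F(\sqrt{a_j})$ embeds in the residual algebra $B$ with the correct index, a point you flag but leave open. The paper's simultaneous treatment of all generators via the Witt kernel is exactly what sidesteps these embedding and bookkeeping problems.
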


Cependant on montre que la d\'ecomposition adapt\'ee n'est plus vraie pour les extensions multiquadratiques de degr\'e de s\'eparabilit\'e $8$:

\begin{prop}
Il existe un corps $F$ de caract\'eristique $2$ v\'erifiant $\cd_2(F)= 2$, une $F$-alg\`ebre $A$ simple centrale de degr\'e $8$ et d'exposant $2$ qui n'est pas \`a division, une extension $K/F$ triquadratique s\'eparable tel que $A_K$ soit d\'eploy\'ee mais $A$ n'admette pas de d\'ecomposition adapt\'ee \`a $K$.
\label{pcex}
\end{prop}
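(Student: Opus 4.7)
Je propose de transposer à la caractéristique $2$ le contre-exemple d'Elman-Lam-Tignol-Wadsworth~\cite[Remark 5.8]{ELTW}, en remplaçant les extensions de Kummer $F(\sqrt{a})$ par les extensions d'Artin-Schreier $F(\wp^{-1}(a))$ et les algèbres $(a,b)$ par les algèbres symboles $[a,b)$ de l'introduction. L'observation clé est la suivante: pour $K=F(\wp^{-1}(a_1),\wp^{-1}(a_2),\wp^{-1}(a_3))$ triquadratique séparable, avec $K_i:=F(\wp^{-1}(a_i))$, une décomposition adaptée $A\simeq Q_1\otimes Q_2\otimes Q_3$ avec $Q_i\supset K_i$ force $Q_i\simeq[a_i,c_i)$ pour certains $c_i\in F^\times$, donc $[A]=\sum_i[a_i,c_i)$ appartient au sous-groupe $N_0\subseteq{}_2\mathrm{Br}(F)$ engendré par les symboles $[a_i,c)$, $c\in F^\times$. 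Inversement, un argument de Skolem-Noether montre que, dès que $[A]\in N_0$ et $\deg A=8$, l'algèbre $A$ admet une décomposition adaptée au triplet spécifié; en outre, par bilinéarité du symbole en caractéristique $2$, le sous-groupe $N_0$ ne dépend pas du choix de la base de $\langle a_1,a_2,a_3\rangle\subset F/\wp(F)$. Il suffit donc de réaliser une classe dans $N:=\ker\bigl({}_2\mathrm{Br}(F)\to{}_2\mathrm{Br}(K)\bigr)$ qui ne soit pas dans $N_0$.

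Une source naturelle d'éléments de $N\setminus N_0$ est un symbole $[a,b)$ où $a\in F$ n'est pas dans $\langle a_1,a_2,a_3\rangle\pmod{\wp(F)}$ (de sorte que $K(\wp^{-1}(a))/K$ est une extension propre) et $b\in F^\times$ est une norme de $K(\wp^{-1}(a))/K$ (ce qui entraîne $[a,b)_K=0$). Pour obtenir un tel symbole non nul et hors de $N_0$ sur un corps $F$ de caractéristique $2$ avec $\cd_2(F)=2$, je partirais d'un corps parfait $k_0$ avec $\cd_2(k_0)=0$ (par exemple $k_0=\overline{\mathbb{F}_2}$) et de cinq indéterminées $a_1,a_2,a_3,a,b$ sur $k_0$. À partir de $F_0=k_0(a_1,a_2,a_3,a,b)$, on construit par itération transfinie à la Merkurjev un sur-corps $F$ satisfaisant: (i) $\cd_2(F)=2$; (ii) les classes $a_1,a_2,a_3,a$ restent $\mathbb{F}_2$-linéairement indépendantes dans $F/\wp(F)$; (iii) $b$ est une norme de $K(\wp^{-1}(a))/K$ (en adjoignant le corps de fonctions de la variété de Severi-Brauer associée au symbole $[a,b)_K$), donc $[a,b)\in N$; (iv) le symbole $[a,b)$ reste non nul dans $F$ et hors de $N_0$. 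On prend alors pour $A$ l'algèbre $M_4\bigl([a,b)\bigr)$, simple centrale de degré $8$, d'exposant $2$, non à division, et dont la classe de Brauer $[a,b)$ appartient à $N\setminus N_0$.

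L'obstacle technique principal est la vérification simultanée de (iii) et (iv) à travers la construction filtrée de $F$: il s'agit de forcer $b$ à être une norme sans annuler $[a,b)$ ni l'introduire dans $N_0$, tout en maintenant $\cd_2(F)=2$. Comme dans~\cite{ELTW}, le contrôle s'effectue par spécialisation et par un calcul de résidus sur une valuation discrète bien choisie (typiquement la valuation $b$-adique sur $F_0$ prolongée à $F$), en utilisant les résidus de Kato-Milne pour les symboles $[a,b)$ en caractéristique $2$ au lieu des résidus classiques en caractéristique $\neq 2$. Ce calcul de résidus en caractéristique $2$, joint à la vérification que la procédure à la Merkurjev pour forcer $\cd_2=2$ préserve la non-trivialité de $[a,b)$ dans $N\setminus N_0$, constitue la principale difficulté.
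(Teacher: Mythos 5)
Votre r\'eduction initiale est correcte et co\"{\i}ncide en substance avec celle du texte: pour $A$ de degr\'e $8$ contenant $K=F(\wp^{-1}(a_1),\wp^{-1}(a_2),\wp^{-1}(a_3))$, l'existence d'une d\'ecomposition adapt\'ee \'equivaut \`a $[A]\in N_0:=\sum_{i}[a_i,F^{\times})$, et il suffit donc d'exhiber une classe de $N=\operatorname{Ker}(\operatorname{Br}(F)\to \operatorname{Br}(K))$ hors de $N_0$ sur un corps de caract\'eristique $2$ avec $\cd_2(F)=2$. Mais c'est pr\'ecis\'ement l\`a que r\'eside tout le contenu de la proposition, et votre construction ne le fournit pas. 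D'abord, l'\'etape (iii) telle que vous la d\'ecrivez ne peut pas fonctionner: la vari\'et\'e de Severi--Brauer de $[a,b)_K$ est une $K$-vari\'et\'e, et adjoindre son corps de fonctions (qui contient $K$) rendrait $a_1,a_2,a_3$ triviaux dans $F/\wp(F)$, d\'etruisant la condition (ii); il faudrait passer par une restriction de Weil ou une corestriction, et il n'est alors plus clair que $b$ devienne une norme. Ensuite et surtout, le contr\^ole de (iv) --- maintenir $[a,b)\notin N_0$ --- \`a travers les \'etages de la tour qui forcent $\cd_2(F)=2$ n'est pas un simple ``calcul de r\'esidus'': $N_0$ est param\'etr\'e par tout $F^{\times}$ du corps limite, les valuations et les corps r\'esiduels changent \`a chaque \'etage, et les applications r\'esidus pour $H^2_2$ en caract\'eristique $2$ pr\'esentent des ph\'enom\`enes de ramification sauvage qui rendent l'argument de sp\'ecialisation d\'elicat. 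D\'eterminer si $N=N_0$ est exactement le probl\`eme difficile que Elman--Lam--Tignol--Wadsworth puis Leep--Smith r\'esolvent par des calculs explicites; votre esquisse reporte cette difficult\'e sans la traiter.

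Le texte \'evite enti\`erement cette machinerie transfinie: il prend $F=k(x,y)$ avec $k$ alg\'ebriquement clos (donc $\cd_2(F)=2$ gratuitement), construit explicitement une forme quadratique $\psi$ de dimension $4$ sur $F$ qui devient hyperbolique sur l'extension triquadratique $M$ gr\^ace \`a l'analogue en caract\'eristique $2$ du lemme de Leep--Smith (proposition \ref{pcex2}), puis d\'emontre que $\psi\notin W(F)\otimes[1,a]+W(F)\otimes[1,b]+W(F)\otimes[1,c]$ (proposition \ref{lem2ex}) par des transferts, par le th\'eor\`eme d'annulateurs d'Aravire--Baeza et par un calcul de formes r\'esiduelles sur $k(x)((y))$; l'alg\`ebre $A$ est alors d\'efinie par $M_2(A)\cong C(\psi\perp[1,\Delta(\psi)]\perp[0,0])$ et l'obstruction \`a la d\'ecomposition adapt\'ee se lit via $I^3_qF=0$, exactement selon votre r\'eduction. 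Pour rendre votre approche compl\`ete, il vous faudrait soit mener \`a bien les v\'erifications (iii) et (iv) dans la tour (travail substantiel et \`a l'issue incertaine), soit, plus raisonnablement, produire comme dans le texte une classe explicite de $N\setminus N_0$ sur un corps explicite.
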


La preuve du th\'eor\`eme \ref{t1} est bas\'ee, d'une part sur les articles \cite{AL, ALO} donnant les noyaux de Witt des extensions multiquadratiques de $F$ de degr\'e de s\'eparabilit\'e au plus $4$, et d'autre part sur la propri\'et\'e d'excellence de certaines extensions de $F$ pour les formes de $I^2_qF$. Ce th\'eor\`eme va au-del\`a de la d\'ecomposition adapt\'ee pour les extensions biquadratiques \'etablie dans \cite[Theorem 4.2]{BC}. Noter que dans le cas particulier o\`u $K$ est purement ins\'eparable, quelques r\'esultats (ind\'ependants de la dimension cohomologique de $F$)  de d\'ecomposition adapt\'ee ont \'et\'e obtenus par  Mammone et Moresi \cite[th\'eor\`emes 4 et 5]{MM95}. Quant \`a la proposition \ref{pcex}, elle donne l'analogue en caract\'eristique $2$ d'un r\'esultat de Elman-Lam-Tignol-Wadsworth \cite[Remark 5.8]{ELTW}. Notre m\'ethode est sp\'ecifique \`a la caract\'eristique $2$ et fait appel \`a un r\'esultat sur les formes diff\'erentielles \cite{AB3} et elle est aussi bas\'ee sur une adaptation de certains arguments de Leep et Smith \cite{LS}. Comme application, nous utilisons la proposition \ref{pcex} pour construire un exemple \'el\'ementaire de la non excellence des extensions biquadratiques s\'eparables.

\section{Quelques rappels}
Pour plus de d\'etails sur les notions utilis\'ees dans cet article on renvoie \`a \cite{EKM}. Rappelons qu'une $F$-forme quadratique $\phi$ s'\'ecrit \`a isom\'etrie pr\`es:$$\phi \cong [a_1, b_1]\perp \cdots \perp [a_r, b_r] \perp \left<c_1\right>\perp \cdots \perp \left<c_s\right>,$$o\`u $[a, b]$ (\resp $\left<c\right>$) d\'esigne la forme quadratique $ax^2+xy+by^2$ (\resp la forme quadratique $cx^2$). La forme $\phi$ est dite singuli\`ere (\resp non singuli\`ere) si $s>0$ (\resp $s=0$). 

Pour $a, b, c\in F$ avec $c\neq 0$, on note $[a; c; b]$ la forme quadratique $ax^2+cxy+by^2$. Rappelons que $[a; c; b]$ est isom\'etrique \`a $[a,c^{-2}b]$.

Une forme quadratique $\phi$ d'espace sous-jacent $V$ est dite isotrope s'il existe $v\in V\setminus \{0\}$ tel que $\phi(v)=0$. Sinon, $\phi$ est dite anisotrope. La forme $\phi$ se d\'ecompose de mani\`ere unique \`a isom\'etrie pr\`es comme suit:$$\phi \cong i\times [0, 0]\perp j\times \left<0\right> \perp \phi_{an},$$o\`u $\phi_{an}$ est une forme quadratique anisotrope, qu'on appelle la partie anisotroipe de $\phi$. L'entier $i$ s'appelle l'indice de Witt de $\phi$, on le note $i_W(\phi)$.

Pour tous $a_1, \cdots, a_n\in F^{\times}$, on d\'esigne par $\left<a_1, \cdots, a_n\right>_b$ la forme bilin\'eaire diagonale donn\'ee par:$$((x_1, \cdots, x_n),(y_1, \cdots, y_n))\mapsto \sum_{i=1}^na_ix_iy_i.$$

Dans cet article le terme ``forme bilin\'eaire'' signifie forme bilin\'eaire sym\'etrique r\'eguli\`ere. On note $W_q(F)$ (\resp $W(F)$) le groupe de Witt des $F$-formes quadratiques non singuli\`eres (\resp l'anneau de Witt des $F$-formes bilin\'eaires). Le groupe $W_q(F)$ est muni d'une action de $W(F)$-module induite de fa\c con naturelle par le produit tensoriel: $B\otimes \phi(v\otimes v')=B(v,v)\phi(v')$ pour $v\in V$ et $v'\in V'$, o\`u $B$ est une forme bilin\'eaire d'espace sous-jacent $V$ et $\phi$ est une forme quadratique non singuli\`ere d'espace sous-jacent $V'$ \cite{B1}. Cette action induit une filtration sur $W_q(F)$ par les sous-modules $(I^n_qF)_{n\geq 1}$ comme suit: $I^1_qF=W_q(F)$ et pour $n\geq 2$ on prend $I^n_qF=I^{n-1}F\otimes W_q(F)$, o\`u $I^kF$ est la $k$-i\`eme puissance de l'id\'eal fondamental $IF$ de $W(F)$ form\'e des $F$-formes bilin\'eaires de dimension paire (on prend $I^0F=W(F)$). Le sous-module $I^n_qF$ est additivement engendr\'e par les $n$-formes quadratiques de Pfister $\left<\left<a_1, \cdots, a_{n-1}\right>\right>_b\otimes [1, b]$, o\`u $\left<\left<a_1, \cdots, a_{n-1}\right>\right>_b$ est la $(n-1)$-forme bilin\'eaire de Pfister $\left<1, a_1\right>_b\otimes \cdots \otimes\left<1, a_{n-1}\right>_b$. Soit $\overline{I_q^n}F$ (\resp $\overline{I^n}F$) le quotient $I^n_qF/I^{n+1}_qF$ (\resp le quotient $I^nF/I^{n+1}F$) pour tout entier $n\geq 1$.
\vskip1mm

Soient $\phi$ et $\psi$ deux formes quadratiques d'espaces sous-jacents $V$ et $W$, respectivement. On dit que $\phi$ est domin\'ee par $\psi$, qu'on note $\phi \prec \psi$, s'il existe une injection $F$-lin\'eaire $\sigma: V\longrightarrow W$ v\'erifiant $\psi(\sigma(v))=\phi(v)$ pour tout $v\in V$.

Une forme quadratique $\phi$ est dite une voisine de Pfister s'il existe une forme de Pfister $\pi$ telle que $2\dim \phi>\dim \pi$ et $a\phi \prec \pi$ pour un certain $a\in F^{\times}$. Dans ce cas, $\pi$ est unique et  $\phi$ est isotrope si et seulement si $\pi$ est isotrope.

Pour tout entier $m\geq 1$, soit $\Omega_F^m=\wedge^m\Omega_F^1$ l'espace des $m$-formes diff\'erentielles sur $F$, o\`u $\Omega_F^1$ est le $F$-espace vectoriel engendr\'e par les symboles $dx$ v\'erifiant les relations: $d(x+y)=dx +dy$ et $d(xy)=xdy +ydx$ pour tous $x, y\in F$. On pose $\Omega_F^0=F$ et $\Omega_F^k=0$ si $k<0$. L'application d'Artin-Schreier classique $\wp:F\longrightarrow F$, $x \mapsto x^2-x$, s'\'etend en l'op\'erateur d'Artin-Schreier $\wp: \Omega_F^m\longrightarrow \Omega_F^m/d\Omega_F^{m-1}$ donn\'e par: $$\sum_{finie}c_i \frac{da_{i_1}}{a_{i_1}}\wedge \cdots \wedge \frac{da_{i_m}}{a_{i_m}} \mapsto \sum_{finie}\overline{\wp(c_i)\frac{da_{i_1}}{a_{i_1}}\wedge \cdots \wedge \frac{da_{i_m}}{a_{i_m}}},$$o\`u $d:\Omega_F^{m-1}\longrightarrow \Omega_F^m$ est l'op\'erateur diff\'erentiel donn\'e par: $$d(xda_1\wedge \cdots \wedge da_{m-1})=dx\wedge da_1\wedge \cdots \wedge da_{m-1}.$$ On note $\nu_F(m)$ et $H_2^{m+1}(F)$ le noyau et le conoyau de $\wp$, respectivement. Un c\'el\`ebre r\'esultat de Kato \cite{K} donne le lien entre les formes quadratiques (formes bilin\'eaires) et les formes diff\'erentiells comme suit: 
							
\begin{eqnarray}\nonumber
e_n: \overline{I}^{n+1}_qF & \rightarrow & H_2^{n+1}(F) \\\label{Katoiso}
\overline{\left<\left< a_1, \hdots, a_n\right>\right>_b\otimes [1, b]} & \mapsto & \overline{b\frac{d a_1}{a_1} \wedge \hdots \wedge \frac{d a_n}{a_n}}.
\end{eqnarray}

\begin{eqnarray}\nonumber
f_n: \overline{I}^nF & \rightarrow & \nu_F(n) \\\label{Katoiso2}
\overline{\left<\left< a_1, \hdots, a_n\right>\right>_b} & \mapsto & \frac{d a_1}{a_1} \wedge \hdots \wedge \frac{d a_n}{a_n}.
\end{eqnarray}

En particulier, de l'isomorphisme (\ref{Katoiso2}), on d\'eduit que $\nu_F(n)$ est additivement engendr\'e par les symboles logarithmiques $\frac{d a_1}{a_1} \wedge \hdots \wedge \frac{d a_n}{a_n}$.

On sait par \cite[(5.12)]{AB} que $H^{n+1}_2(F)\cong H^1({\rm Gal}(F_s/F), \nu_{F_s}(n))$, o\`u $F_s$ est la cl\^oture s\'eparable de $F$. Dans \cite[Appendix 101]{EKM} ce groupe est not\'e $H^{n+1, n}(F, \Z/2\Z)$. La $2$-dimension cohomologique $\cd_2(F)$ de $F$ est le plus petit entier tel que pour toute extension finie $L/F$ et tout entier $n\geq \cd_2(F)$, on ait $H^{n+1, n}(L, \Z/2\Z)=0$, ce qui \'equivaut \`a $\overline{I^{n+1}_q}L=0$ par l'isomorphisme (\ref{Katoiso}). Cela donne par le Hauptsatz d'Arason-Pfister $I^{n+1}_qL=0$.

Pour toute forme $\phi$ non singuli\`ere de dimension $2m$, on note $\Delta(\phi)$ et $C(\phi)$ son invariant d'Arf et son alg\`ebre de Clifford. Plus explicitement, si $\phi \cong a_1[1, b_1] \perp \cdots \perp a_m[1, b_m]$, alors $\Delta(\phi)= \sum_{i=1}^m b_i+\wp(F)\in F/\wp(F)$ et $C(\phi)\cong \otimes_{i=1}^m[b_i, a_i)$. En particulier, $C(\phi)$ est une $F$-alg\`ebre simple centrale d'exposant $2$ et de degr\'e $2^m$. Si de plus $\phi \in I^2_qF$, alors $C(\phi)\cong M_2(D_{\phi})$, o\`u $D_{\phi}$ est un produit tensoriel de $m-1$ alg\`ebres de quaternions.

On note ${\rm Br}(F)$ le groupe de Brauer de $F$. Pour une $F$-alg\`ebre simple centrale $A$, l'entier $\sqrt{\dim_F A}$ s'appelle le degr\'e de $A$ et on le note $\deg A$. L'indice de $A$ est le degr\'e de l'alg\`ebre \`a division Brauer-\'equivalente \`a $A$, on le note $\ind A$.

L'hypoth\`ese $\cd_2(F)\leq 2$ qui nous int\'eresse dans cet article permet d'avoir des informations tr\`es pr\'ecises concernant les $F$-alg\`ebres simples centrales d'exposant $2$:

\begin{thm} (\cite[Theorem 4.1]{BC}) Supposons que $\cd_2(F)\leq 2$. Pour toute $F$-forme quadratique $\phi \in I^2_qF$, l'alg\`ebre $D_{\phi}$ est \`a division si et seulement si $\phi$ est anisotrope. De plus, toute $F$-alg\`ebre simple centrale d'exposant $2$ et de degr\'e $2^m$ est isomorphe \`a une $F$-alg\`ebre $D_{\phi}$ pour une certaine $F$-forme quadratique $\phi \in I^2_qF$ de dimension $2m+2$.
\label{propdec}
\end{thm}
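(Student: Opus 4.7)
L'ingr\'edient central est l'annulation $I^3_qF=0$, cons\'equence de $\cd_2(F)\leq 2$ via $\overline{I^3_q}F=0$ (isomorphisme de Kato (\ref{Katoiso})) puis le Hauptsatz d'Arason-Pfister, comme rappel\'e dans l'extrait. L'invariant de Clifford $c: I^2_qF\to H_2^2(F)\simeq {}_2\mathrm{Br}(F)$, obtenu en composant la projection $I^2_qF\to\overline{I^2_q}F$ avec l'isomorphisme $e_1$ de (\ref{Katoiso}) et l'identification $H_2^2(F)\simeq{}_2\mathrm{Br}(F)$, a pour noyau $I^3_qF$. Sous notre hypoth\`ese, $c$ est donc un isomorphisme, qui envoie $\phi\in I^2_qF$ sur $[D_\phi]$.

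\textbf{Premi\`ere affirmation.} Soit $\phi\in I^2_qF$ de dimension $2m$. Sens direct (isotrope $\Rightarrow$ non \`a division): la partie anisotrope $\phi_{an}\in I^2_qF$ a dimension $\leq 2m-2$ et $c(\phi)=c(\phi_{an})$, donc $\ind D_\phi=\ind D_{\phi_{an}}\leq 2^{m-2}<2^{m-1}=\deg D_\phi$, et $D_\phi$ n'est pas \`a division. Sens r\'eciproque, par contrapos\'ee: supposons $\ind D_\phi\leq 2^{m-2}$. Tout \'el\'ement de ${}_2\mathrm{Br}(F)$ est produit tensoriel de classes de quaternions (Albert), et sous $\cd_2(F)\leq 2$ la longueur symbolique d'une classe est major\'ee par $\log_2$ de son indice, donc $[D_\phi]=[\otimes_{i=1}^k[b_i,a_i)]$ avec $k\leq m-2$. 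On forme alors $\psi=a_1[1,b_1]\perp\cdots\perp a_k[1,b_k]\perp[1,b]$ o\`u $b\in F$ rel\`eve $\Delta(a_1[1,b_1]\perp\cdots\perp a_k[1,b_k])$ modulo $\wp(F)$; l'adjonction de $[1,b]$, dont l'alg\`ebre de Clifford $[b,1)$ est toujours d\'eploy\'ee en caract\'eristique $2$ (puisque $x^2+xy+by^2$ repr\'esente $1$), place $\psi$ dans $I^2_qF$ sans modifier la classe de Brauer. Ainsi $\dim\psi\leq 2m-2$ et $c(\psi)=c(\phi)$; l'injectivit\'e de $c$ impose $\phi=\psi$ dans $W_q(F)$, d'o\`u $\dim\phi_{an}\leq\dim\psi<2m$, contredisant l'anisotropie de $\phi$.

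\textbf{Seconde affirmation.} Soit $A$ simple centrale d'exposant $2$ et de degr\'e $2^m$. La surjectivit\'e de $c$ fournit $\phi_0\in I^2_qF$ anisotrope de dimension $2k_0$ avec $c(\phi_0)=[A]$. La premi\`ere affirmation assure que $D_{\phi_0}$ est \`a division, donc $\deg D_{\phi_0}=2^{k_0-1}=\ind A\leq 2^m$ et $k_0\leq m+1$. On pose $\phi=\phi_0\perp(m+1-k_0)\times[1,0]$: l'ajout de $(m+1-k_0)$ plans hyperboliques maintient $\phi\in I^2_qF$, fixe $\dim\phi=2m+2$ et pr\'eserve la classe $[D_\phi]=[D_{\phi_0}]=[A]$. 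Ayant m\^eme degr\'e $2^m$ et m\^eme classe de Brauer, $D_\phi$ et $A$ sont isomorphes.

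\textbf{Principale difficult\'e.} Le point crucial est le sens ``anisotrope $\Rightarrow$ \`a division'' de la premi\`ere affirmation, qui n\'ecessite le contr\^ole de la longueur symbolique dans ${}_2\mathrm{Br}(F)$ sous l'hypoth\`ese $\cd_2(F)\leq 2$: toute classe d'indice $2^\ell$ doit s'\'ecrire comme produit tensoriel d'exactement $\ell$ quaternions. La sp\'ecificit\'e de la caract\'eristique $2$, \`a savoir la disponibilit\'e du quaternion d\'eploy\'e $[b,1)$ pour tout $b\in F$, permet par ailleurs d'ajuster l'invariant d'Arf d'une forme sans modifier la classe de Brauer, rendant possibles les constructions de $\psi$ et de $\phi$ avec la dimension pr\'escrite.
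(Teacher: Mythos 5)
Le papier ne d\'emontre pas cet \'enonc\'e: il le cite tel quel de \cite[Theorem 4.1]{BC}, de sorte qu'il n'y a pas de preuve interne \`a laquelle comparer la v\^otre; elle doit donc \^etre jug\'ee pour elle-m\^eme, et elle contient une lacune essentielle, que vous signalez vous-m\^eme comme ``la principale difficult\'e'' sans la r\'esoudre. Le point critique est votre affirmation selon laquelle, sous $\cd_2(F)\leq 2$, la longueur symbolique d'une classe de ${}_2{\rm Br}(F)$ est major\'ee par $\log_2$ de son indice. Cette affirmation n'est ni d\'emontr\'ee ni attribuable \`a un r\'esultat ind\'ependant: le th\'eor\`eme d'Albert en caract\'eristique $2$ garantit seulement que toute classe d'exposant $2$ est une somme de classes de quaternions, sans aucun contr\^ole du nombre de facteurs en fonction de l'indice. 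Un tel contr\^ole est d'ailleurs faux d\`es que $\cd_2(F)>2$ (les alg\`ebres ind\'ecomposables d'exposant $2$ de \cite{BCL} en t\'emoignent), et sous l'hypoth\`ese $\cd_2(F)\leq 2$ il est \'equivalent au th\'eor\`eme \`a d\'emontrer: si toute classe d'indice $2^{\ell}$ est produit d'au plus $\ell$ quaternions, vos arguments (corrects) donnent le th\'eor\`eme; r\'eciproquement, le th\'eor\`eme appliqu\'e \`a l'alg\`ebre \`a division sous-jacente \`a une classe donn\'ee fournit imm\'ediatement cette borne. Votre preuve est donc circulaire en son coeur.

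Pour \^etre pr\'ecis sur ce qui est acquis et ce qui manque: l'annulation $I^3_qF=0$ via Kato et le Hauptsatz, l'injectivit\'e de l'invariant de Clifford $c$ sur $I^2_qF$, le sens facile (isotrope $\Rightarrow$ non \`a division), l'ajustement de l'invariant d'Arf par la forme $[1,b]$ d'alg\`ebre de Clifford d\'eploy\'ee $[b,1)$, et le passage \`a la dimension $2m+2$ par ajout de plans hyperboliques sont corrects, mais ce sont les parties routini\`eres. Tout le contenu du th\'eor\`eme se concentre dans le sens ``anisotrope $\Rightarrow$ \`a division'', c'est-\`a-dire dans la d\'ecomposabilit\'e totale, avec degr\'e \'egal \`a l'indice, des alg\`ebres \`a division d'exposant $2$ sur $F$. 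Sa d\'emonstration exige un argument de fond --- typiquement une r\'ecurrence sur le degr\'e o\`u l'on construit dans l'alg\`ebre \`a division un \'el\'ement d'Artin--Schreier puis une sous-alg\`ebre de quaternions qui s'en s\'epare tensoriellement, en exploitant le fait que $I^3_qL=0$ pour toute extension finie $L/F$ --- et c'est pr\'ecis\'ement l'objet de \cite{BC} (comme de \cite{Kah} en caract\'eristique diff\'erente de $2$). En l'\'etat, votre texte r\'eduit correctement le th\'eor\`eme \`a cette assertion de longueur symbolique, mais ne la d\'emontre pas.
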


On obtient le corollaire suivant:

\begin{cor} Supposons que $\cd_2(F)\leq 2$ et soit $\phi \in I^2_qF$. Si $\ind D_{\phi}=2^l$, alors $\dim \phi_{an}=2l+2$.
\label{cordec}
\end{cor}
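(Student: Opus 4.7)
\medskip

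\noindent\textbf{Plan de preuve.} On proc\`ede en ramenant l'\'enonc\'e au Th\'eor\`eme \ref{propdec} via la partie anisotrope. Comme $\phi\in I^2_qF$ est non singuli\`ere et d'invariant d'Arf trivial, et que ces deux propri\'et\'es sont des invariants de Witt, la d\'ecomposition $\phi \cong i_W(\phi)\times [0,0]\perp \phi_{an}$ entra\^ine $\phi_{an}\in I^2_qF$. L'alg\`ebre $D_{\phi_{an}}$ est donc bien d\'efinie, et le Th\'eor\`eme \ref{propdec} assure qu'elle est \`a division puisque $\phi_{an}$ est anisotrope par construction. En posant $\dim \phi_{an}=2m$, cette alg\`ebre est un produit tensoriel de $m-1$ alg\`ebres de quaternions, d'o\`u $\ind D_{\phi_{an}}=\deg D_{\phi_{an}}=2^{m-1}$.

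\medskip

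\noindent La cl\'e restante est de comparer les classes de Brauer de $D_\phi$ et $D_{\phi_{an}}$. Puisque $\phi=\phi_{an}$ dans $W_q(F)$, en particulier $\phi\equiv \phi_{an}\pmod{I^3_qF}$, et comme l'application Clifford $\phi\mapsto [D_\phi]\in {\rm Br}(F)$ est un invariant de Witt sur $I^2_qF$ (les corrections provenant de l'invariant d'Arf s'annulant sur $I^2_qF$), on obtient $[D_\phi]=[D_{\phi_{an}}]$ dans ${\rm Br}(F)$. Comme $D_{\phi_{an}}$ est \`a division, ceci force $\ind D_\phi=\ind D_{\phi_{an}}=2^{m-1}$. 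Alternativement, on peut d\'ecomposer $[C(\phi)]=[C(\phi_{an})]\cdot [C([0,0])]^{i_W(\phi)}$ dans ${\rm Br}(F)$ en utilisant que $[0,0]$ a une alg\`ebre de Clifford d\'eploy\'ee et que la formule usuelle pour l'alg\`ebre de Clifford d'une somme orthogonale est sans correction lorsque les invariants d'Arf sont triviaux.

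\medskip

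\noindent En combinant ces deux \'etapes, l'hypoth\`ese $\ind D_\phi=2^l$ donne $m-1=l$, soit $\dim \phi_{an}=2l+2$. Le point qui demande le plus de soin est la justification que la classe de Brauer de $D_\phi$ est un invariant de Witt sur $I^2_qF$; c'est classique mais il est essentiel ici que l'on travaille \`a l'int\'erieur de $I^2_qF$ pour que l'obstruction d'Arf ne cr\'ee pas de terme correctif parasite. Une fois ce point acquis, le reste n'est qu'un calcul imm\'ediat de dimensions et de degr\'es.
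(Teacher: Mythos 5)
Votre preuve est correcte, mais elle emprunte un chemin sensiblement diff\'erent de celui du papier. Vous travaillez directement avec $\phi_{an}$ : apr\`es avoir not\'e que $\phi_{an}\in I^2_qF$, vous appliquez la premi\`ere moiti\'e du th\'eor\`eme \ref{propdec} (\emph{$D_\psi$ est \`a division si et seulement si $\psi$ est anisotrope}) pour obtenir que $D_{\phi_{an}}$ est \`a division, donc d'indice \'egal \`a son degr\'e $2^{m-1}$ o\`u $\dim\phi_{an}=2m$, puis vous concluez par le fait que la classe de Brauer de l'alg\`ebre de Clifford est un invariant de Witt sur $I^2_qF$ --- ce qui d\'ecoule imm\'ediatement de $C(\psi\perp[0,0])\cong M_2(C(\psi))$, vu la formule $C(\psi)\cong\otimes_i\,[b_i,a_i)$ rappel\'ee dans le papier; il n'y a donc aucune \emph{correction d'Arf} \`a redouter, et votre digression sur ce point est inutile mais inoffensive. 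Le papier proc\`ede autrement : il utilise la seconde moiti\'e du th\'eor\`eme \ref{propdec} pour r\'ealiser l'alg\`ebre \`a division $A$ sous-jacente \`a $D_\phi$ sous la forme $D_{\phi'}$ avec $\phi'$ anisotrope de dimension $2l+2$, calcule $C(\phi)\cong C((m-l)\times[0,0]\perp\phi')$, puis identifie $\phi_{an}$ \`a $\phi'$ au moyen du r\'esultat de Sah ($\phi\perp\phi'\in I^3_qF$) et de l'annulation $I^3_qF=0$. Votre argument est plus \'economique (il \'evite Sah et n'utilise l'hypoth\`ese $\cd_2(F)\leq 2$ qu'\`a travers le crit\`ere de division), tandis que celui du papier fournit en prime un repr\'esentant explicite $\phi'$ de la partie anisotrope attach\'e \`a $A$. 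Signalons seulement que le cas d\'eg\'en\'er\'e o\`u $\phi$ est hyperbolique ($\phi_{an}=0$, $l=0$) \'echappe \`a l'\'enonc\'e tel quel; ce point concerne aussi bien votre preuve que celle du papier, qui supposent toutes deux implicitement $\phi$ non hyperbolique.
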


\noindent{\bf Preuve.} Posons $\dim \phi=2m+2$. On sait que $C(\phi) \cong M_2(D_{\phi})$ avec $D_{\phi}$ un produit de $m$ alg\`ebres de quaternions, donc $\deg D_{\phi}=2^m$. Posons $\ind D_{\phi}=2^l$. Alors, $D_{\phi}\cong M_{2^{m-l}}(A)$ pour une $F$-alg\`ebre centrale \`a division $A$ de degr\'e $2^l$. Par le th\'eor\`eme pr\'ec\'edent, il existe $\phi'$ une $F$-forme quadratique anisotrope de dimension $2l+2$ telle que $C(\phi')\cong M_2(A)$. Ainsi, on obtient 
\begin{eqnarray*}
C(\phi) &\cong & M_2(D_{\phi})\\ & \cong & M_{2^{m-l}}(M_2(A))\\ &\cong & M_{2^{m-l}}(C(\phi'))\\ & \cong & C((m-l)\times [0, 0] \perp \phi').
\end{eqnarray*}
Par un r\'esultat de Sah \cite{S}, on a $\phi \perp -\phi' \in I^3_qF$. Comme $I^3_qF=0$ (car $\cd_2(F)\leq 2$), on d\'eduit que $\phi$ est Witt-\'equivalente \`a $\phi'$, ce qui implique $\phi_{an}\cong \phi'$ est de dimension $2l+2$.\qed 
\medskip

On finit cette section par rappeler la d\'efinition des formes r\'esiduelles d'une forme quadratique sur un corps ${\mathcal F}$ Hens\'elien pour une valuation discr\`ete. Soient $A$ l'anneau de la valuation de ${\mathcal F}$, $\pi$ une uniformisante et $\kappa =A/\pi A$ le corps r\'esiduel. Soit $\theta$ une forme quadratique sur ${\mathcal F}$ anisotrope (\'eventuellement singuli\`ere) d'espace sous-jacent $V$. Pour tout $i\in \Z$, on associe l'ensemble $$V_i=\{v\in V\mid \theta(v)\in \pi^iA\},$$ c'est un $A$-module d'apr\`es \cite[page 342]{MMW91}. On obtient les $\kappa$-formes quadratiques $\overline{\theta_0}$ et $\overline{\theta_1}$ d\'efinies par:
$$\begin{tabular}{cccl}
$\overline{\theta_i}:$ & $V_i/V_{i+1}$ & $\longrightarrow$ & $\kappa $\\ & $v+V_{i+1}$ & $\mapsto$ & $\overline{\pi^{-i}\theta(v)}$.
\end{tabular}$$

La forme $\overline{\theta_0}$ (\resp $\overline{\theta_1}$) est appel\'ee la premi\`ere forme r\'esiduelle de $\theta$ (\resp la seconde  forme r\'esiduelle de $\theta$). \`A noter que lorsque $\theta$ est non singuli\`ere, on a $\dim \theta= \dim \overline{\theta_0} +\dim \overline{\theta_1}$ \cite[Theorem 1]{MMW91}. 

Soient $u, v \in {\mathcal F}$ des unit\'es et $\epsilon \in \Z$ tels que la forme $\lambda:=[u, \pi^{\epsilon}v]$ soit anisotrope. Par l'in\'egalit\'e de Schwarz \cite[(4)]{MMW91}, \cite[Lemma 2.2]{T}, on a n\'ecessairement $\epsilon\leq 0$. Voici quelques cas de calcul des formes r\'esiduelles de $\lambda$:\vskip1mm
(A) Si $\epsilon=0$, alors la premi\`ere forme r\'esiduelle est $[\overline{u}, \overline{v}]$ et la seconde forme r\'esiduelle est $0$.\vskip1mm
(B) Si $\epsilon$ est impair, alors la premi\`ere forme r\'esiduelle est $\left<\overline{u}\right>$ et la seconde forme r\'esiduelle est $\left<\overline{v}\right>$.\vskip1mm
(C) Si $\epsilon$ est pair non nul, alors $\lambda\cong [u; \pi^{\frac{-\epsilon}{2}}; v]$. Si la forme $\left<\overline{u}, \overline{v}\right>$ est anisotrope, alors la premi\`ere forme r\'esiduelle de $\lambda$ est $\left<\overline{u}, \overline{v}\right>$, et la seconde forme r\'esiduelle est $0$.

On renvoie \`a \cite[Section 3]{MMW91} pour un algorithme de calcul des formes r\'esiduelles de toute ${\mathcal F}$-forme quadratique anisotrope.

\section{R\'esultats pr\'eliminaires}

Rappelons qu'une extension $K/F$ est dite {\it excellente} si pour toute $F$-forme quadratique anisotrope $\phi$, il existe une $F$-forme quadratique $\psi$ tel que $(\phi_K)_{an}\cong \psi_K$. 

On dit que $K/F$ est excellente pour les formes de $I^m_qF$ si pour toute forme anisotrope $\phi \in I^m_qF$, il existe une forme $\psi \in I^m_qF$ telle que $(\phi_K)_{an}\cong \psi_K$. On note $\E_m(F)$ l'ensemble des extensions de $F$ qui sont excellentes pour les formes de $I^m_qF$.

Dans cet article on s'int\'eresse \`a l'ensemble $\E_2(F)$ qui est li\'e au probl\`eme de d\'ecomposabilit\'e des alg\`ebres simples centrales d'exposant $2$. \`A ce propos, on donne l'exemple suivant.

\begin{exemple}
(1) Toute extension $K/F$ excellente appartient \`a $\E_2(F)$.\\(2) Toute extension $K/F$ quadratique s\'eparable, ou multiquadratique purement 
ins\'eparable appartient \`a $\E_2(F)$.
\end{exemple}

\noindent{\bf Preuve.} (1) Supposons que $K/F$ soit excellente. Soit $\phi \in I^2_qF$ anisotrope. Alors, il existe $\psi \in W_q(F)$ anisotrope telle que $(\phi_K)_{an}\cong \psi_K$. Par \cite[Lemma 2.1]{BC} on peut supposer $\psi \in I^2_qF$.

(2) Il est bien connu que toute extension quadratique de $F$ est excellente \cite[Lemma 5.4]{HL}. D'apr\`es \cite{H}, toute extension multiquadratique purement ins\'eparable de $F$ est excellente.\qed\vskip1.5mm

On va prouver le r\'esultat suivant donnant d'autres exemples d'\'el\'ements de $\E_2(F)$ et qui servira pour la preuve du th\'eor\`eme \ref{t1}.

\begin{prop} Supposons que $I^3_qF=0$. Soit $K$ une extension de $F$ qui est de l'un des trois types suivants:
\begin{enumerate}
\item[(1)] biquadratique s\'eparable.
\item[(2)] multiquadratique de degr\'e de s\'eparabilit\'e $\leq 2$.
\item[(3)] triquadratique de degr\'e de s\'eparabilit\'e $4$.
\end{enumerate}
Alors, l'extension $K/F$ appartient \`a $\E_2(F)$.
\label{p1}
\end{prop}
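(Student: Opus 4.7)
The plan is to exhibit, for each anisotropic $\phi\in I^2_qF$, a form $\psi\in I^2_qF$ such that $\psi_K\cong(\phi_K)_{an}$. The strategy combines the excellence of simpler extensions (purely inseparable multiquadratic extensions, by \cite{H}, and separable quadratic extensions, by \cite[Lemma 5.4]{HL}) with the explicit descriptions of the Witt kernels $W_q(K/F)$ for separable-degree $\leq 4$ multiquadratic extensions provided in \cite{AL,ALO}, together with Lemma~2.1 of \cite{BC} which permits all descended forms to be chosen inside $I^2_qF$.

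A first general reduction: under $I^3_qF=0$, Sah's theorem implies that two forms of the same dimension in $I^2_qF$ that are Witt-equivalent over $K$ are in fact isometric over $K$ (the difference lies in $I^3_qK=0$, hence is hyperbolic, and the dimension match forces isometry). It therefore suffices to produce some $\psi\in I^2_qF$ with $\phi\perp\psi\in W_q(K/F)$ and $\dim\psi=\dim(\phi_K)_{an}$; the task is then to use the Witt kernel descriptions to adjust $\phi$ by elements of $W_q(K/F)\cap I^2_qF$ until the dimension of the base change stabilizes at its anisotropic value.

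For case~(2), write $K=K_0\cdot L$ with $K_0/F$ purely inseparable multiquadratic and $L/F$ either trivial or separable quadratic. Excellence of $K_0/F$ yields $\psi_0\in I^2_qF$ with $(\psi_0)_{K_0}\cong(\phi_{K_0})_{an}$; since $K/K_0$ is the scalar extension of $L/F$, excellence of $L/F$ handles the second layer and produces the required $\psi$ over $F$. For case~(1), with $K=F(\alpha,\beta)$ biquadratic separable, the descent is done in two steps through the intermediate quadratic subfield $F(\alpha)$, combining excellence of $K/F(\alpha)$ and of $F(\alpha)/F$; the explicit generators of the Witt kernel $W_q(K/F)\cap I^2_qF$ from \cite{AL} are then used to assemble the two steps into a single form defined over $F$. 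Case~(3), with $K=F(\alpha,\beta)(\sqrt{c})$ triquadratic of separability degree $4$, reduces to case~(1) applied to $F(\alpha,\beta)/F$ after first descending along the purely inseparable quadratic extension $K/F(\alpha,\beta)$, whose excellence is known from \cite{H}.

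The main obstacle is case~(3): the descent through two layers must ultimately land inside $I^2_qF$, not merely inside $I^2_qF(\alpha,\beta)$, and one has to ensure that a form obtained at the intermediate level lifts globally to $F$. This is exactly what the Witt kernel description for separable-degree $4$ triquadratic extensions in \cite{ALO} controls: it provides the generating forms of $W_q(K/F)\cap I^2_qF$ needed to match dimensions at each stage. The hypothesis $I^3_qF=0$ is essential in the concluding step, where Sah's theorem upgrades the Witt-equivalence produced by the kernel argument into the required isometry $\psi_K\cong(\phi_K)_{an}$.
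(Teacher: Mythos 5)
There is a genuine gap at the central step: descending the anisotropic part from the intermediate field down to $F$. You decompose $K$ as a tower $F\subset L\subset K$ and assert that excellence of each layer ``produces the required $\psi$ over $F$'', but excellence of the top layer $K/L$ only yields a form defined over $L$, not over $F$, and excellence does not compose along towers: a biquadratic separable extension is a tower of two excellent quadratic extensions and yet is not excellent in general (this is precisely what Corollaire \ref{corexce} of the paper, \cite{ELTW} and \cite{LM} show). As written, your argument for cases (1) and (2) would prove full excellence of these extensions, which is false; the restriction to $I^2_qF$ and the hypothesis $I^3_qF=0$ must intervene somewhere, and they do not in your sketch. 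The appeal to the Witt-kernel computations of \cite{AL,ALO} to ``assemble the two steps'' and ``match dimensions at each stage'' does not fill this hole: those kernels describe which classes vanish over $K$, but give no control on the dimension of a representative defined over $F$, which is exactly what membership in $\E_2(F)$ requires.

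What the paper actually does at this point is the following. It writes $K=L(\alpha)$ with $\alpha^2+\alpha=a\in F$ and $L/F$ excellent (purely inseparable multiquadratic, separable quadratic, or mixed biquadratic --- so in case (3) the separable quadratic layer is always placed on top, unlike in your reduction). If $(\phi_L)_{an}$ becomes isotropic over $K$ with Witt index $r$, then $(\phi_L)_{an}\cong\langle x_1,\dots,x_r\rangle_b\otimes[1,a]\perp\phi'$ with $x_i\in L^\times$ and $\phi'$ anisotropic over $K$. The hypothesis $I^3_qF=0$ propagates to $I^3_qL=0$ (Corollaire \ref{c5}; this is where \cite{AL,ALO} really enter the proof), which forces $r=1$ because every $5$-dimensional Pfister neighbour over $L$ is then isotropic, and which gives $\langle 1,x_1\rangle_b\otimes(\phi_L)_{an}\sim 0$, hence $(\phi_L)_{an}\cong x_1(\phi_L)_{an}$, so one may take $x_1=1$. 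The part killed by $K/L$ is therefore $[1,a]$, which is defined over $F$; consequently $\phi'$ is Witt-equivalent over $L$ to a form coming from $F$, and a second use of the excellence of $L/F$ shows that $\phi'$, hence $(\phi_K)_{an}$, is defined over $F$, with \cite[Lemma 2.1]{BC} placing the descended form in $I^2_qF$. This normalization of the hyperbolic plane that appears across the top layer is the key idea missing from your proposal. (Minor point: your preliminary reduction via Sah's theorem is superfluous, since Witt-equivalent nonsingular forms of equal dimension are already isometric.)
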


On sait qu'il existe un corps $F_0$ de caract\'eristique $2$ et une extension biquadratique s\'eparable de $F_0$ qui n'est pas excellente (cet exemple a \'et\'e construit en premier en caract\'eristique $\neq 2$ puis a \'et\'e \'etendu \`a la caract\'eristique $2$ dans \cite{LM}). On renvoie aussi au corollaire \ref{corexce} o\`u on donne un autre exemple d'extension biquadratique s\'eparable non-excellente. Ainsi, l'assertion (1) de la proposition pr\'ec\'edente montre qu'en g\'en\'eral l'ensemble $\E_2(F)$ est diff\'erent de celui de toutes les extensions excellentes de $F$.

Avant de prouver la proposition \ref{p1}, on aborde un autre probl\`eme qui est consid\'er\'e comme la r\'eciproque (\`a \'equivalence de Witt pr\`es) de l'excellence pour les formes de $I^m_qF$.

\begin{prob} {\bf (Descente en degr\'e $m$).}\\
Soient $K/F$ une extension et $\phi \in I^m_qK \cap {\rm Im}(W_q(F)\longrightarrow W_q(K))$. A-t-on $\phi \in {\rm Im}(I^m_qF\longrightarrow I^m_qK)$?
\label{prob1}
\end{prob}

En g\'en\'eral, la r\'eponse \`a ce probl\`eme est li\'ee au calcul du noyau ${\rm Ker}(H_2^{m+1}(F)\longrightarrow H_2^{m+1}(K))$ dont la complexit\'e d\'epend de la structure de l'extension $K/F$. Voici un calcul qui nous int\'eresse dans cet article.

\begin{thm} (\cite{AL}, \cite{ALO}) Soient $K=F(\sqrt{a_1}, \cdots, \sqrt{a_n})$ une extension purement ins\'eparable de $F$ de degr\'e $2^n$, et $L=F(\alpha, \beta)$ une extension biquadratique s\'eparable avec $\alpha^2+\alpha=a \in F\setminus \wp(F)$ et $\beta^2+\beta=b \in F\setminus \wp(F)$. Soit $M$ l'un des corps suivants: $K$, $K(\alpha)$ et $K(\alpha, \beta)$. Alors, pour tout entier $m\geq 1$ on a:
\[ {\rm Ker}(H_2^{m+1}(F)\longrightarrow H_2^{m+1}(M))=\begin{cases}\sum_{i=1}^n\overline{\frac{da_i}{a_i}\wedge \Omega_F^{m-1}} & \text{si } M=K,\\
\sum_{i=1}^n\overline{\frac{da_i}{a_i}\wedge \Omega_F^{m-1}} + \overline{a\nu_F(m)} & \text{si } M=K(\alpha),\\\sum_{i=1}^n\overline{\frac{da_i}{a_i}\wedge \Omega_F^{m-1}} + \overline{a\nu_F(m)} + \overline{b\nu_F(m)} & \text{si } M=K(\alpha, \beta).
\end{cases}\]
\label{p2}
\end{thm}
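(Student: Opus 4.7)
The plan is to reduce the problem to a computation in differential forms via the Kato isomorphism $H_2^{m+1}(F) \cong \Omega_F^m/(\wp\Omega_F^m + d\Omega_F^{m-1})$ and to decompose $M/F$ as a tower of simple quadratic extensions of two kinds: purely inseparable steps of the form $F(\sqrt{a})/F$ and separable Artin--Schreier steps of the form $F(\wp^{-1}(a))/F$. The kernel is then computed step by step and the contributions are summed.

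First I would treat a single purely inseparable step $L=F(\sqrt{a})$ and prove
\[
\operatorname{Ker}\bigl(H_2^{m+1}(F)\longrightarrow H_2^{m+1}(L)\bigr) = \overline{\tfrac{da}{a}\wedge \Omega_F^{m-1}}.
\]
The inclusion $\supset$ is immediate since $da = 2\sqrt{a}\,d\sqrt{a} = 0$ in $\Omega_L^1$ (characteristic $2$). For the reverse inclusion I would express each differential form over $L$ in an $F$-basis reflecting the decomposition $L = F \oplus F\sqrt{a}$, taking into account the new generator $d\sqrt{a}$ of $\Omega_L^1$, compute the Artin--Schreier operator $\wp$ and the differential $d$ componentwise, and match coefficients in the relation asserting that a class in $\Omega_F^m$ becomes Artin--Schreier equivalent to zero over $L$.

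Next I would treat a single separable Artin--Schreier step $L=F(\alpha)$ with $\alpha^2+\alpha = a$, and prove that the kernel is $\overline{a\,\nu_F(m)}$. The inclusion $\supset$ rests on the identity $a\eta = \wp(\alpha)\eta = \wp(\alpha\eta)$ in $\Omega_L^m$, valid for a logarithmic symbol $\eta \in \nu_F(m)$ by the very definition of $\wp$ (which acts only on the coefficient of the form); extending by linearity gives the claim for arbitrary $\eta \in \nu_F(m)$. For the reverse inclusion I would again decompose $\Omega_L^m$ in the $F$-basis adapted to $\{1,\alpha\}$ (together with the new generator $d\alpha$) and match coefficients, concluding that a pulled-back class in the kernel must be of the form $a\eta$ with $\eta \in \nu_F(m)$.

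Finally I would assemble the three cases. For $M=K$, an induction on the tower $F \subset F(\sqrt{a_1}) \subset \cdots \subset K$ combined with the first step produces the sum $\sum_{i=1}^{n}\overline{\tfrac{da_i}{a_i}\wedge \Omega_F^{m-1}}$, after verifying that the kernel at each intermediate level descends to a form living at the $F$-level. For $M=K(\alpha)$ and $M=K(\alpha,\beta)$, one or two further applications of the separable step contribute the terms $\overline{a\,\nu_F(m)}$ and $\overline{b\,\nu_F(m)}$. The main obstacle is the descent step in the induction: showing that the kernel computed over an intermediate field $M_{i-1}$, when pulled back to $F$, produces no classes outside the claimed sum. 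This is the delicate bookkeeping carried out in \cite{AL} and \cite{ALO}, and relies on careful manipulation of the interaction of $\wp$ and $d$ with the chosen $F$-bases of the intermediate fields in the tower.
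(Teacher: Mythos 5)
First, note that the paper offers no proof of this statement: Theorem \ref{p2} is imported verbatim from \cite{AL} and \cite{ALO}, so there is no internal argument to compare yours against. Judged on its own, your outline gets the easy halves right: the inclusions $\supset$ follow exactly as you say, from $da=0$ in $\Omega_{F(\sqrt{a})}^{1}$ for the inseparable steps and from $a\eta=\wp(\alpha\eta)$ for a logarithmic symbol $\eta\in\nu_F(m)$ in the separable steps, and these computations are complete as written.

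The genuine gap is that the reverse inclusions --- which are the entire content of the theorem --- are not proved but deferred to the very references the theorem is quoted from. Two points make this more than ``delicate bookkeeping.'' First, even for a single quadratic step the coefficient-matching you describe must be carried out modulo the subgroup $\wp(\Omega_L^m)+d\Omega_L^{m-1}$, not in $\Omega_L^m$ itself; controlling the $d\Omega_L^{m-1}$ ambiguity forces one to fix a $2$-basis of $F$ adapted to $a$ and is the technical heart of the Aravire--Baeza method, not a routine expansion in the basis $\{1,\sqrt{a}\}$ or $\{1,\alpha\}$. Second, and more seriously, the induction on the tower does not close up by itself: if $\xi\in H_2^{m+1}(F)$ dies over $K=F(\sqrt{a_1},\dots,\sqrt{a_n})$, the inductive hypothesis only places the restriction of $\xi$ in $\sum_{i\ge 2}\overline{\frac{da_i}{a_i}\wedge\Omega_{F(\sqrt{a_1})}^{m-1}}$, a subgroup generated by classes of the intermediate field $F(\sqrt{a_1})$, and deciding which classes of $F$ restrict into that subgroup is a nontrivial descent problem --- precisely the problem solved in \cite{AL} and \cite{ALO}. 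That this kind of step can genuinely fail in nearby situations is illustrated by the characteristic-not-$2$ theory of multiquadratic Witt kernels (\cite{ELTW}, \cite{LS}), which the present paper itself exploits in its last section. So your proposal is an accurate road map of the known proof, but as a proof it is incomplete at exactly the point where the theorem has content.
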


En utilisant les isomorphismes de Kato (\ref{Katoiso}) et (\ref{Katoiso2}), le th\'eor\`eme pr\'ec\'edent se traduit dans le langage des formes quadratiques comme suit:

\begin{cor}
On garde les m\^emes notations et hypoth\`eses que dans le th\'eor\`eme \ref{p2}. Alors:
\[{\rm Ker}(\overline{I_q^{m+1}}F\longrightarrow \overline{I_q^{m+1}}M)=\begin{cases} \sum_{i=1}^n\overline{\left<\left<a_i\right>\right>_b\otimes I^m_qF} & \text{si } M=K,\\
\sum_{i=1}^n\overline{\left<\left<a_i\right>\right>_b\otimes I^m_qF} + \overline{I^mF\otimes [1,a]} & \text{si } M=K(\alpha),\\ \sum_{i=1}^n\overline{\left<\left<a_i\right>\right>_b\otimes I^m_qF} + \overline{I^mF\otimes [1,a]} + \overline{I^mF\otimes [1,b]} & \text{si } M=K(\alpha, \beta).
\end{cases}\]
\label{c1}
\end{cor}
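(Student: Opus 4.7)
L'id\'ee est que le corollaire n'est que la reformulation du th\'eor\`eme \ref{p2} via les isomorphismes de Kato $e_m: \overline{I_q^{m+1}}F \xrightarrow{\sim} H_2^{m+1}(F)$ rappel\'es en (\ref{Katoiso}). La premi\`ere \'etape serait d'observer que la formule d\'efinissant $e_m$ sur les g\'en\'erateurs de Pfister commute avec toute extension de corps, donc $e_m$ est naturel en $F$. Il en r\'esulte un diagramme commutatif reliant la restriction $\overline{I_q^{m+1}}F \to \overline{I_q^{m+1}}M$ \`a la restriction $H_2^{m+1}(F) \to H_2^{m+1}(M)$, dont les fl\`eches verticales sont des isomorphismes; l'image par $e_m$ du noyau de gauche est donc exactement le noyau de droite, lequel est d\'ecrit par le th\'eor\`eme \ref{p2}.

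La seconde \'etape consisterait \`a identifier, via $e_m$, chacun des trois sous-groupes du th\'eor\`eme \ref{p2} avec son pendant du c\^ot\'e quadratique. Pour le premier, un g\'en\'erateur typique de $\left<\left<a_i\right>\right>_b \otimes I^m_q F$ s'\'ecrit $\left<\left<a_i, c_1, \ldots, c_{m-1}\right>\right>_b \otimes [1, d]$ avec $c_j \in F^\times$ et $d \in F$, dont l'image par $e_m$ est la classe de $d \frac{d a_i}{a_i} \wedge \frac{d c_1}{c_1} \wedge \cdots \wedge \frac{d c_{m-1}}{c_{m-1}}$. Comme $\Omega_F^{m-1}$ est engendr\'e additivement par les \'el\'ements de cette forme (en utilisant $dx = x \frac{dx}{x}$ pour $x \in F^\times$ et la multilin\'earit\'e du produit ext\'erieur), ces images engendrent pr\'ecis\'ement $\overline{\frac{d a_i}{a_i} \wedge \Omega_F^{m-1}}$, ce qui donne la correspondance souhait\'ee.

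De fa\c con analogue, pour $u \in \{a, b\}$, un g\'en\'erateur typique de $I^m F \otimes [1, u]$ s'\'ecrit $\left<\left<c_1, \ldots, c_m\right>\right>_b \otimes [1, u]$, qui s'envoie par $e_m$ sur $u \frac{d c_1}{c_1} \wedge \cdots \wedge \frac{d c_m}{c_m}$; comme $\nu_F(m)$ est additivement engendr\'e par les symboles logarithmiques d'apr\`es (\ref{Katoiso2}), ces images engendrent exactement $\overline{u \nu_F(m)}$. La d\'emarche \'etant enti\`erement formelle une fois le th\'eor\`eme \ref{p2} admis, l'unique point m\'eritant un soin r\'edactionnel est la v\'erification syst\'ematique de la correspondance entre les familles de g\'en\'erateurs des deux c\^ot\'es; aucune difficult\'e math\'ematique nouvelle n'est \`a pr\'evoir.
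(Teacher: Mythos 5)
Votre démonstration est correcte et suit exactement la même voie que le papier, qui se contente d'ailleurs de la phrase « en utilisant les isomorphismes de Kato (\ref{Katoiso}) et (\ref{Katoiso2}), le théorème précédent se traduit dans le langage des formes quadratiques » sans donner plus de détails. Votre rédaction explicite précisément cette traduction (naturalité de $e_m$, diagramme commutatif, correspondance des générateurs $\overline{\left<\left<a_i\right>\right>_b\otimes I^m_qF}\leftrightarrow\overline{\frac{da_i}{a_i}\wedge\Omega_F^{m-1}}$ et $\overline{I^mF\otimes[1,u]}\leftrightarrow\overline{u\,\nu_F(m)}$), sans rien y ajouter ni y omettre.
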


En proc\'edant par les m\^emes arguments utilis\'es dans \cite[Section 6]{ALO}, apr\`es s'\^etre ramen\'e \`a un corps de base ayant une $2$-base finie, on obtient du corollaire \ref{c1}:

\begin{cor} On garde les m\^emes notations et hypoth\`eses que dans le th\'eor\`eme \ref{p2}. Alors:
\[{\rm Ker}(I_q^{m+1}F\longrightarrow I_q^{m+1}M)=\begin{cases} \sum_{i=1}^n\left<\left<a_i\right>\right>_b\otimes I^m_qF & \text{si } M=K,\\
\sum_{i=1}^n\left<\left<a_i\right>\right>_b\otimes I^m_qF + I^mF\otimes [1,a] & \text{si } M=K(\alpha),\\ \sum_{i=1}^n\left<\left<a_i\right>\right>_b\otimes I^m_qF + I^mF\otimes [1,a] + I^mF\otimes [1,b] & \text{si } M=K(\alpha, \beta).
\end{cases}\]
\label{c2}
\end{cor}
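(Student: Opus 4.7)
Le plan est de d\'eduire ce corollaire du corollaire \ref{c1} (qui calcule le noyau modulo $I^{m+2}_qF$) par un rel\`evement inductif, apr\`es s'\^etre ramen\'e au cas o\`u le corps de base admet une $2$-base finie. C'est la strat\'egie adopt\'ee dans \cite[Section 6]{ALO}.

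\emph{\'Etape 1 (R\'eduction \`a une $2$-base finie).} Soit $\phi\in {\rm Ker}(I^{m+1}_qF\longrightarrow I^{m+1}_qM)$. Une \'ecriture de $\phi$ comme somme finie de produits tensoriels $b\otimes q$ avec $b\in I^mF$ et $q\in W_qF$, ainsi qu'une suite finie d'isom\'etries t\'emoignant de $\phi_M=0$ dans $W_q(M)$, ne font intervenir qu'un nombre fini d'\'el\'ements de $F$ et de $M$. On trouve alors un sous-corps $F_0\subset F$ de type fini sur $\mathbb{F}_2$ contenant $a_1,\ldots,a_n$ (et, le cas \'ech\'eant, $a,b$), une forme $\phi_0\in I^{m+1}_qF_0$ avec $(\phi_0)_F=\phi$, et une extension $M_0/F_0$ du m\^eme type que $M/F$ telle que $(\phi_0)_{M_0}=0$ dans $I^{m+1}_qM_0$. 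Le corps $F_0$ \'etant de type fini sur $\mathbb{F}_2$, il admet une $2$-base finie, de cardinal $d$.

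\emph{\'Etape 2 (Annulation de la filtration).} Comme $F_0$ a une $2$-base de cardinal $d$, l'espace $\Omega^k_{F_0}$ est nul pour $k>d$, donc $H_2^{k+1}(F_0)=0$ pour $k\geq d$. Par l'isomorphisme de Kato~(\ref{Katoiso}), $\overline{I^{k+1}_q}F_0=0$ pour $k\geq d$, c'est-\`a-dire $I^{k+1}_qF_0=I^{k+2}_qF_0$ pour tout $k\geq d$. Par le Hauptsatz d'Arason-Pfister, toute forme anisotrope non nulle dans $I^k_qF_0$ est de dimension au moins $2^k$, donc $\bigcap_{k\geq 1}I^k_qF_0=0$. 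Combin\'e avec la stabilisation ci-dessus, on obtient $I^{d+1}_qF_0=0$.

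\emph{\'Etape 3 (Descente inductive).} Par le corollaire \ref{c1} appliqu\'e \`a $\phi_0$ au niveau $m+1$, il existe $\sigma_{m+1}$ dans le sous-module annonc\'e $S_{m+1}$ tel que $\phi_0-\sigma_{m+1}\in I^{m+2}_qF_0$; cette diff\'erence reste dans le noyau $I^{m+2}_qF_0\longrightarrow I^{m+2}_qM_0$ puisque $(\sigma_{m+1})_{M_0}=0$. On r\'eapplique le corollaire \ref{c1} au niveau $m+2$; le sous-module annonc\'e $S_{m+2}$ fait intervenir $I^{m+1}_qF_0\subset I^m_qF_0$ (\resp $I^{m+1}F_0\subset I^mF_0$), donc $S_{m+2}\subset S_{m+1}$. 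En it\'erant, on construit $\phi_0=\sigma_{m+1}+\sigma_{m+2}+\cdots+\sigma_N$, somme finie gr\^ace \`a l'\'etape 2, avec chaque $\sigma_{m+j}\in S_{m+1}$. Par stabilit\'e de $S_{m+1}$ sous extension des scalaires de $F_0$ \`a $F$, on obtient $\phi\in S_{m+1}$ sur $F$. L'inclusion r\'eciproque est imm\'ediate: $\left<1,a_i\right>_b$ devient hyperbolique sur $F(\sqrt{a_i})\subset K$, $[1,a]$ sur $K(\alpha)$, et $[1,b]$ sur $K(\alpha,\beta)$.

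\emph{Principal obstacle.} Le point d\'elicat est l'\'etape 1: il faut \'elargir $F_0$ par un nombre fini d'\'el\'ements t\'emoins de sorte que les $a_i$ restent $2$-ind\'ependants dans $F_0$, et que $a,b$ d\'efinissent bien sur $F_0$ une extension biquadratique s\'eparable non triviale; ainsi $M_0/F_0$ est bien du type requis pour appliquer le corollaire \ref{c1}. Cette v\'erification technique de compatibilit\'e de structure est pr\'ecis\'ement le contenu de \cite[Section 6]{ALO}, que l'on reprend tel quel.
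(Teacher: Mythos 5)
Your proposal is correct and takes essentially the same route as the paper: the paper gives no written proof of this corollary, saying only that it follows from the corollaire \ref{c1} by the arguments of \cite[Section 6]{ALO} after reduction to a base field with a finite $2$-base, which is exactly your specialization-plus-inductive-lifting scheme (reverse inclusion included). The only blemish is a harmless off-by-one in your \'etape 2: a $2$-base of cardinal $d$ gives $\Omega^k_{F_0}=0$ for $k>d$, hence $H_2^{k+1}(F_0)=0$ for $k\geq d+1$ (not $k\geq d$), which combined with le Hauptsatz still yields $I^N_qF_0=0$ pour $N$ assez grand, et c'est tout ce dont l'argument a besoin.
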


Pour ce qui est du probl\`eme \ref{prob1}, on obtient la r\'eponse suivante:

\begin{cor} On garde les m\^emes notations et hypoth\`eses que dans le th\'eor\`eme \ref{p2}. Alors, le probl\`eme \ref{prob1} a une r\'eponse positive pour l'extension $M/F$, c'est-\`a-dire, on a$$I^m_qM\cap {\rm Im}(W_q(F)\longrightarrow W_q(M))={\rm Im}(I^m_qF \longrightarrow I^m_qM).$$  
\label{c3}
\end{cor}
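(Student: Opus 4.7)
The plan is to argue by induction on $m$, using Corollary \ref{c1} to control the obstruction at each step; the inclusion $\operatorname{Im}(I^m_q F \to I^m_q M) \subseteq I^m_q M \cap \operatorname{Im}(W_q(F) \to W_q(M))$ is tautological, so only the reverse inclusion requires work. The base case $m = 1$ is immediate since $I^1_q F = W_q(F)$.

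For the inductive step from $m$ to $m+1$ (with $m \geq 2$), I take $\phi \in I^{m+1}_q M \cap \operatorname{Im}(W_q(F) \to W_q(M))$. Since $I^{m+1}_q M \subseteq I^m_q M$, the inductive hypothesis yields $\eta \in I^m_q F$ with $\eta_M = \phi$ in $W_q(M)$, and then $\overline{\eta} \in \overline{I^m_q} F = I^m_q F / I^{m+1}_q F$ lies in the kernel of $\overline{I^m_q} F \to \overline{I^m_q} M$, because $\eta_M = \phi \in I^{m+1}_q M$. Applying Corollary \ref{c1} with the index shift that describes this kernel, I find that $\overline{\eta}$ is a sum of classes of $\langle\langle a_i \rangle\rangle_b \otimes \gamma$ with $\gamma \in I^{m-1}_q F$, plus (depending on the choice of $M$) terms of the form $\delta \otimes [1,a]$ and $\epsilon \otimes [1,b]$ with $\delta, \epsilon \in I^{m-1} F$. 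The crucial observation is that each such generator lifts to an element of $I^m_q F$ that is automatically killed under base change to $M$: $\langle 1, a_i \rangle_b$ becomes hyperbolic over $K$ since $a_i$ is a square there, $[1,a]$ becomes hyperbolic over $F(\alpha) \subseteq M$, and $[1,b]$ becomes hyperbolic over $F(\alpha, \beta) = M$. Lifting the kernel description and collecting these generators, I write $\eta = g + \xi$ with $g \in I^m_q F \cap \operatorname{Ker}(W_q(F) \to W_q(M))$ and $\xi \in I^{m+1}_q F$; then $\phi = \eta_M = \xi_M$ realizes $\phi$ as the image of an element of $I^{m+1}_q F$, closing the induction.

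What remains is the transition $m = 1 \to m = 2$, not covered by Corollary \ref{c1} (which only describes kernels of $\overline{I^{m+1}_q}$ for $m + 1 \geq 2$, hence not the Arf quotient $\overline{I^1_q} F \cong F/\wp(F)$). I would handle this case by a direct analysis of $\operatorname{Ker}(F/\wp(F) \to M/\wp(M))$: for $M = K$ purely inseparable, the basis $\{\sqrt{a_I}\}_I$ of $K/F$ gives $y^2 \in F$ for every $y \in K$, so $y^2 + y \in F$ forces $y \in F$, making the kernel trivial and placing $\eta$ already in $I^2_q F$; for $M = K(\alpha)$ (respectively $K(\alpha, \beta)$), the kernel is $\{0, a\}$ (respectively $\{0, a, b, a+b\}$) modulo $\wp(F)$, realized by $[1,a]$ (respectively $[1,a]$ and $[1,b]$) lying in $\operatorname{Ker}(W_q(F) \to W_q(M))$, and subtracting a suitable combination from $\eta$ renders the Arf invariant trivial without changing $\eta_M$. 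The conceptual heart of the argument, and its main difficulty, is the liftability observation: the canonical generators from Corollary \ref{c1} are not merely classes in the graded quotient but honest elements of $W_q(F)$ that vanish over $M$, thanks to the specific structure of the extensions involved.
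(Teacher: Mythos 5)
Your proof is correct and follows essentially the same route as the paper's: both climb the filtration one step at a time via the corollary \ref{c1}, using that its kernel generators are genuine $F$-forms that become hyperbolic over $M$ --- the paper iterates upward from the level $p$ where the given $F$-form sits, while you package the same mechanism as an induction on $m$. Your separate, explicit treatment of the bottom (Arf-invariant) level is a welcome extra precision, since the paper invokes the corollary \ref{c1} at that level even though, as stated, it only describes the kernels on $\overline{I^{n}_q}$ for $n\geq 2$.
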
 

\noindent{\bf Preuve.} L'inclusion $\supset$ est \'evidente. Montrons alors l'inclusion $\subset$. La preuve est la m\^eme pour les corps $K, K(\alpha)$ et $K(\alpha, \beta)$. On se limite \`a consid\'erer le cas $M=K$. Soient $\phi \in I^m_qK$ et $\psi \in W_q(F)$ tel que $\phi \sim \psi_{K}$. Soit $p$ tel que $\psi \in I^p_qF$. Si $p\geq m$, alors il n'y a rien \`a prouver. Supposons que $p<m$. La condition $\phi \sim \psi_{K}$ implique $\psi +I^{p+1}_qF \in {\rm Ker}(\overline{I_q^{p}}F\longrightarrow \overline{I_q^{p}}K)$. Par le corollaire \ref{c1}, on a $\psi + \sum_{i=1}^n\left<\left<a_i\right>\right>_b\otimes \phi_i \in I^{p+1}_qF$ pour certaines formes $\phi_i \in I^{p-1}_qF$. Soit $\psi'= \psi + \sum_{i=1}^n\left<\left<a_i\right>\right>_b\otimes \phi_i \in I^{p+1}_qF$. On a $\psi'_{K}\sim \psi_{K} \sim \phi$. Si $m=p+1$ alors on a l'affirmation souhait\'ee. Si $m>p+1$, alors $\psi' +I^{p+2}_qF\in  {\rm Ker}(\overline{I_q^{p+1}}F\longrightarrow \overline{I_q^{p+1}}K)$. Ainsi de suite on continue le proc\'ed\'e jusqu'\`a avoir une forme $\gamma:=\psi + \sum_{i=1}^n\left<\left<a_i\right>\right>_b\otimes \gamma_i \in I^{m}_qF$ pour certaines formes $\gamma_1, \cdots, \gamma_n \in W_q(F)$. Comme $\gamma \in I^m_qF$ et $\gamma_K \sim \phi$, alors $\phi \in {\rm Im}(I^m_qF \longrightarrow I^m_qK)$.\qed\vskip1.5mm

On a le corollaire suivant:

\begin{cor} On garde les m\^emes notations et hypoth\`eses que dans le th\'eor\`eme \ref{p2}. Si $I^m_qF=0$, alors $I^m_qM\cap {\rm Im}(W_q(F)\longrightarrow W_q(M))=0$.
\label{c6}
\end{cor}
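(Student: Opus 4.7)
La strat\'egie consiste \`a d\'eduire ce corollaire directement du corollaire \ref{c3}, qui caract\'erise d\'ej\`a l'intersection $I^m_qM\cap {\rm Im}(W_q(F)\longrightarrow W_q(M))$ comme l'image de $I^m_qF$ dans $I^m_qM$. Il n'y a en effet rien de plus \`a prouver: le corollaire \ref{c6} doit se voir comme la sp\'ecialisation triviale mais commode du corollaire \ref{c3} au cas o\`u le groupe de d\'epart $I^m_qF$ est nul.

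Concr\`etement, soit $\phi \in I^m_qM\cap {\rm Im}(W_q(F)\longrightarrow W_q(M))$. D'apr\`es le corollaire \ref{c3} appliqu\'e \`a l'une des trois extensions $M=K$, $M=K(\alpha)$ ou $M=K(\alpha, \beta)$ du th\'eor\`eme \ref{p2}, il existe $\psi \in I^m_qF$ tel que $\phi \sim \psi_M$ dans $W_q(M)$. L'hypoth\`ese $I^m_qF=0$ force $\psi=0$ dans $W_q(F)$, donc $\phi=0$ dans $W_q(M)$. L'inclusion $\{0\}\subset I^m_qM\cap {\rm Im}(W_q(F)\longrightarrow W_q(M))$ \'etant triviale, on obtient l'\'egalit\'e souhait\'ee.

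Aucune difficult\'e technique nouvelle n'est \`a surmonter ici: toute la substance a \'et\'e absorb\'ee en amont, d'abord dans le th\'eor\`eme \ref{p2} (calcul de ${\rm Ker}(H_2^{m+1}(F)\longrightarrow H_2^{m+1}(M))$ via les formes diff\'erentielles), puis dans la traduction en langage des formes quadratiques via les isomorphismes de Kato (\ref{Katoiso}) et (\ref{Katoiso2}) fournissant les corollaires \ref{c1} et \ref{c2}, et enfin dans l'argument d'it\'eration en degr\'e qui conduit au corollaire \ref{c3}. L'int\'er\^et de cet \'enonc\'e est purement logistique: il servira typiquement dans la suite sous l'hypoth\`ese $\cd_2(F)\leq 2$, laquelle entra\^\i ne $I^3_qF=0$ et donc annule l'image dans $W_q(M)$ des formes de $I^3_qF$ pour les extensions multiquadratiques $M/F$ consid\'er\'ees.
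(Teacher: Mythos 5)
Votre preuve est correcte et suit exactement la d\'emarche du papier: le corollaire \ref{c6} est \'enonc\'e sans d\'emonstration car il d\'ecoule imm\'ediatement du corollaire \ref{c3}, l'\'egalit\'e $I^m_qM\cap {\rm Im}(W_q(F)\longrightarrow W_q(M))={\rm Im}(I^m_qF \longrightarrow I^m_qM)$ donnant $0$ d\`es que $I^m_qF=0$. Rien \`a redire.
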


Pour la preuve de la proposition \ref{p1} on aura besoin du lemma suivant. Dans le cas d'une extension quadratique ins\'eparable, ce lemme est une cons\'equence du r\'esultat \cite[Theorem 3.1]{AB2} de Aravire et Baeza sur le $\nu$-invariant en caract\'eristique $2$. Pour garder cet article ``self-contained'', on  donnera une preuve mettant le lien avec le probl\`eme \ref{prob1}.

\begin{lem} Soit $L/F$ une extension quadratique. Si $I^m_qF=0$, alors $I^m_qL=0$.
\label{l1}
\end{lem}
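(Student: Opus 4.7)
The plan is to reduce the lemma to a cohomological vanishing statement via Kato's isomorphism~(\ref{Katoiso}) and then handle the two types of quadratic extension separately.

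\textbf{Reduction.} Kato's isomorphism gives $I^k_q L / I^{k+1}_q L \cong H_2^{k}(L)$, so once one knows $H_2^{k}(L) = 0$ for every $k \geq m$, then $I^k_q L = I^{k+1}_q L$ for all such $k$, whence $I^m_q L = \bigcap_{k \geq m} I^k_q L = 0$ by the Arason-Pfister Hauptsatz applied over $L$. Since $I^m_q F = 0$ implies $I^k_q F = 0$, hence $H_2^{k}(F) = 0$, for every $k \geq m$, it remains to show that for any quadratic extension $L/F$ and each $k \geq m$, the vanishing $H_2^{k}(F) = 0$ forces $H_2^{k}(L) = 0$.

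For the separable case $L = F(\alpha)$ with $\alpha^2 + \alpha = a$, differentiating the minimal polynomial in characteristic $2$ gives $d\alpha = da$ in $\Omega_L^1$, so no new differentials appear and $\Omega_L^k = L \otimes_F \Omega_F^k$ for every $k$. An element $\omega \in \Omega_L^k$ decomposes as $\omega = \omega_0 + \alpha \omega_1$ with $\omega_0, \omega_1 \in \Omega_F^k$, and the identity $\wp(\alpha c \cdot \eta) = a c^2 \eta + \alpha \wp(c) \eta$ (coming from $\alpha^2 = \alpha + a$ applied to a logarithmic $\eta$ over $F^\times$) allows one to absorb, modulo $\wp(\Omega_L^k) + d\Omega_L^{k-1}$, the $\alpha$-component of $\omega$ into a class lying in the image of $H_2^{k}(F) \to H_2^{k}(L)$. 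Since the latter is $0$ by hypothesis, $H_2^{k}(L) = 0$.

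For the purely inseparable case $L = F(\sqrt{a})$, the module $\Omega_L^1$ has an additional summand $L \cdot d\sqrt{a}$, so symbols in $H_2^{k}(L)$ may involve $d\sqrt{a}$. Theorem~\ref{p2} gives ${\rm Ker}(H_2^{k}(F) \to H_2^{k}(L)) = \overline{\frac{da}{a} \wedge \Omega_F^{k-1}}$, so restriction is injective on the part coming from $F$ under our hypothesis. For a ``new'' class $\overline{u \cdot \frac{d\sqrt{a}}{\sqrt{a}} \wedge \eta}$ one uses $(\sqrt{a})^2 = a \in F$, together with the Artin-Schreier relation $\wp(\sqrt{a} \cdot u \eta) \equiv 0$ in $H_2^{k}(L)$ and a decomposition $u = u_0 + u_1 \sqrt{a}$ with $u_0, u_1 \in F$, to rewrite the class as the restriction from $F$ of a symbol of the form $\overline{u_1 \cdot \frac{da}{a} \wedge \eta}$ (modulo $\wp + d$). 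This image lies in $H_2^{k}(F) = 0$, so $H_2^{k}(L) = 0$.

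The delicate step is the Artin-Schreier reduction in the inseparable case, which is essentially the content of the Aravire-Baeza $\nu$-invariant result~\cite[Theorem~3.1]{AB2} that the authors invoke. A purely form-theoretic alternative would combine Corollary~\ref{c3} with a transfer $s_*: W_q(L) \to W_q(F)$ to descend anisotropic Pfister forms from $L$ to $F$ and apply the hypothesis; this requires comparable bookkeeping, with the main obstacle being to control the transfer of the ``new'' symbols involving $\sqrt{a}$.
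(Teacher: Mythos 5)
Your global reduction via Kato's isomorphism (show $H_2^k(L)=0$ for all $k\ge m$, then conclude by Arason--Pfister) is sound and is a different framing from the paper's, but both of your case-by-case arguments have real gaps. In the separable case, the identity $\wp(\alpha c\,\eta)=ac^2\eta+\alpha\wp(c)\eta$ only lets you trade an $\alpha$-component whose coefficient lies in $\wp(F)$ for a class defined over $F$; a general coefficient $c'\in F$ is not of the form $\wp(c)$, so this does not show that every class of $H_2^k(L)$ is a restriction from $F$. Indeed restriction $H_2^k(F)\to H_2^k(L)$ is not surjective for a general separable quadratic extension (already for $k=1$ the map $F/\wp(F)\to L/\wp(L)$ need not be onto). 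What the argument actually requires is exactness of the restriction--corestriction sequence at $H_2^k(L)$: since $H_2^k(F)=0$, every class has trivial corestriction, hence is a restriction, hence is zero. That is \cite[sequence (6.1)]{AB}, which is exactly what the paper invokes for the separable case; you should cite it rather than attempt the absorption by hand.

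The inseparable case is where the real content lies, and your sketch does not supply it. In $\Omega_L^1$ one has $da=0$ because $a$ becomes a square in $L=F(\sqrt a)$, so $\frac{da}{a}$ restricts to zero and there is no direct way to ``rewrite'' a new symbol $\overline{u\,\frac{d\sqrt a}{\sqrt a}\wedge\eta}$ as the restriction of an $F$-symbol of the form $\overline{u_1\frac{da}{a}\wedge\eta}$; showing that the new symbols die is precisely the content of \cite[Theorem 3.1]{AB2}, which you defer to rather than prove. Citing that theorem is legitimate (the paper itself notes the inseparable case follows from it), but then your proof is not self-contained at the one point that matters. Note also that the ``purely form-theoretic alternative'' you mention only in passing is in fact the paper's proof, and it requires no bookkeeping of new symbols at all: with $s(1)=0$ and $s(\alpha)=1$, hyperbolicity of $s_*(\phi)$ produces a vector $v$ with $\phi(v)\in F^\times$, an induction on $\dim\phi$ shows that $\phi$ is defined over $F$, and Corollary \ref{c6} (the descent statement deduced from Corollary \ref{c3}) then forces $\phi=0$.
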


\noindent{\bf Preuve.} Posons $L=F(\alpha)$ tel que $\alpha^2=d\in F\setminus F^2$ ou $\alpha^2+\alpha \in F\setminus \wp(F)$. Soit $s:L\longrightarrow F$ l'application $F$-lin\'eaire donn\'ee par: $s(1)=0$ et $s(\alpha)=1$. Notons $s_*:W_q(L)\longrightarrow W_q(F)$ le transfert induit par $s$. Par \cite[Corollary 34.17]{EKM} on a $s_*(I^m_qL)\subset I^m_qF$. Ainsi, $s_*(I^m_qL)=0$. Soit $\phi \in I^m_qL$. On a $s_*(\phi)\sim 0$.

(1) Supposons que $K/F$ soit s\'eparable. Alors par la suite exacte \cite[sequence (6.1)]{AB} la condition $s_*(\phi)\sim 0$ implique $\phi \sim 0$.

(2) Supposons que $K/F$ soit ins\'eparable. Supposons que $\phi$ soit anisotrope de dimension $\geq 2$. Puisque $s_*(\phi)$ est hyperbolique, elle est en particulier isotrope. Il existe $v\in V\setminus \{0\}$ tel que $s_*(\phi)(v)$, o\`u $V$ est l'espace sous-jacent \`a $\phi$. Par cons\'equent, $b:=\phi(v)\in F$. On a $b\neq 0$ car $\phi$ est anisotrope et $v\neq 0$. Ainsi, $\phi \cong b[1, c] \perp \phi'$ pour $c\in L$ et $\phi'$ une forme quadratique sur $L$. Puisque $[1, c]\cong [1, c^2]$, on peut supposer $c\in F$. Ainsi, $s_*(\phi')=0$. Par r\'ecurrence sur la dimension de $\phi$, on d\'eduit que $\phi'$ est d\'efinie sur $F$. Ainsi, $\phi \in {\rm Im}(W_q(F)\longrightarrow W_q(L))$. Comme $I^m_qF=0$ et $\phi \in I^m_qL$, on d\'eduit par le corollaire \ref{c6} que $\phi=0$, une contradiction \`a notre hypoth\`ese. D'o\`u , $\phi \sim 0$.\qed
\medskip

Le corollaire suivant est imm\'ediat.

\begin{cor} Soit $L/F$ une extension multiquadratique. Si $I^m_qF=0$, alors $I^m_qL=0$.
\label{c5}
\end{cor}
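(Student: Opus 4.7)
La stratégie consiste à décomposer l'extension multiquadratique $L/F$ en une tour d'extensions quadratiques, puis appliquer le lemme \ref{l1} à chaque étage par récurrence.

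Plus précisément, par définition une extension multiquadratique $L/F$ en caractéristique $2$ est de la forme $L = F(\alpha_1, \ldots, \alpha_n)$, où chaque $\alpha_i$ satisfait une équation quadratique sur $F$ (séparable du type $\alpha_i^2+\alpha_i \in F$ ou purement inséparable du type $\alpha_i^2 \in F$). Je poserais $F_0 = F$ et $F_i = F_{i-1}(\alpha_i)$ pour $1\leq i\leq n$, quitte à écarter les indices pour lesquels $\alpha_i \in F_{i-1}$ (l'extension correspondante étant triviale). On obtient alors une tour
\[F=F_0 \subset F_1 \subset \cdots \subset F_n = L,\]
dans laquelle chaque extension $F_i/F_{i-1}$ est quadratique (séparable ou purement inséparable).

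Je procéderais alors par récurrence sur $i$. L'hypothèse de départ est $I^m_q F_0 = I^m_q F = 0$. Si $I^m_q F_{i-1} = 0$, alors par le lemme \ref{l1} appliqué à l'extension quadratique $F_i/F_{i-1}$, on a $I^m_q F_i = 0$. Après $n$ étapes on conclut que $I^m_q L = I^m_q F_n = 0$, ce qui est le résultat voulu.

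Il n'y a à proprement parler pas d'obstacle: tout le travail a déjà été fait dans le lemme \ref{l1}, et la récurrence n'est que l'itération évidente. Le seul point à surveiller est d'être cohérent avec la définition utilisée dans l'article d'une extension multiquadratique (mélangeant éventuellement des générateurs séparables et purement inséparables), mais puisque le lemme \ref{l1} traite indifféremment les deux types d'extensions quadratiques, cette cohérence est immédiate.
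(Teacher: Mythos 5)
Votre preuve est correcte et correspond exactement \`a l'argument que l'article sous-entend en qualifiant le corollaire d'\og imm\'ediat \fg{}~: on d\'ecompose $L/F$ en une tour d'extensions quadratiques et on it\`ere le lemme \ref{l1}. Rien \`a redire.
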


\section*{Preuve de la proposition \ref{p1}} Soit $L$ une extension de $F$ qui est de l'un des trois types suivants:
\begin{enumerate}
\item[(a)] multiquadratique purement ins\'eparable.
\item[(b)] quadratique s\'eparable.
\item[(c)] mixed biquadratique (c'est-\`a-dire, la compos\'ee de deux extensions quadratiques de $F$, l'une s\'eparable et l'autre ins\'eparable).
\end{enumerate}

On rappelle que l'extension $L/F$ est excellente: Le cas (a) est d\^u \`a Hoffmann \cite{H}; (b) est d\^u \`a Hoffmann et Laghribi \cite[Lemma 5.4]{HL}; et (c) a \'et\'e r\'ecemment prouv\'e par Mukhija \cite{LM}.

Soit $F(\alpha)/F$ une extension quadratique s\'eparable avec $\alpha^2+ \alpha= a \in F\setminus \wp(F)$, et soit $K=L(\alpha)$. On suppose $[K:L]=2$ dans les cas (b) et (c). 

La preuve qu'on va produire reprend les arguments utilis\'es dans \cite[Lemma 2.1]{B} montrant que les extensions biquadratiques en caract\'eristique diff\'erente de $2$ satisfont \`a l'excellence pour les formes de $I^2$.

Soit $\phi \in I^2_qF$ anisotrope tel que $\phi_K$ soit isotrope. Il n'y a rien \`a prouver si $\phi_K$ est hyperbolique. Supposons que $\phi_K$ ne soit pas hyperbolique. On commence par montrer que $(\phi_K)_{an}$ est d\'efinie sur $F$.

Puisque $L/F$ est excellente, la forme $(\phi_L)_{an}$ est d\'efinie sur $F$. Si $(\phi_L)_{an}$ est anisotrope sur $K$, alors $(\phi_K)_{an}$ est d\'efinie sur $F$. Supposons que $(\phi_L)_{an}$ soit isotrope sur $K$ d'indice de Witt $r$. Alors, $(\phi_L)_{an} \cong \left< x_1, \cdots, x_r\right>_b\otimes [1, a] \perp \phi'$ avec $x_i \in L^{\times}$ et $\phi'$ une forme quadratique sur $L$ qui est anisotrope sur $K$ (on applique $r$ fois le r\'esultat \cite[Theorem 4.2, page 121]{B}). Puisque $I^3_qL=0$, alors $\left<1, x_1\right>\otimes (\phi_L)_{an} \sim 0$. Ainsi, $(\phi_L)_{an}\cong x_1 (\phi_L)_{an}$, et donc on peut supposer $x_1=1$. Toujours de l'hypoth\`ese $I^3_qL=0$, toute $3$-forme de Pfister sur $L$ est hyperbolique, et par cons\'equent toute forme voisine de Pfister de dimension $5$ sur $L$ est isotrope. Cela implique que $r=1$, c'est-\`a-dire, $(\phi_L)_{an} \cong [1, b] \perp \phi'$. Puisque $\phi'\in {\rm Im}(W_q(F) \longrightarrow W_q(L))$ et que $L/F$ est excellente, il existe $\phi_0\in W_q(F)$ tel que $\phi'\cong (\phi_0)_L$. Comme $\phi'_K$ est anisotrope, alors $(\phi_K)_{an} \cong (\phi_0)_K$. Finalement, en utilisant \cite[Lemma 2.1]{BC}, on peut supposer que $\phi_0\in I^2_qF$.

\section*{Preuve du th\'eor\`eme \ref{t1}}
On suppose $\cd_2(F)\leq 2$. Soit $A$ une $F$-alg\`ebre centrale \`a division d'exposant $2$ et de degr\'e $2^n$. Soit $K$ une extension multiquadratique de $F$ de l'un des trois types d\'ecrits dans le th\'eor\`eme. Posons $[K:F]=2^k$ et \'ecrivons $K=F(\sqrt{a_1}, \cdots, \sqrt{a_r}, \alpha_1, \cdots, \alpha_s)$ tel que $a_i\in F\setminus F^2$ pour tout $1\leq i\leq r$ et $\alpha_j^2+\alpha_j=b_j\in F\setminus \wp(F)$ pour tout $1\leq j\leq s$, avec $r+s=k$.

Puisque $\cd_2(F)\leq 2$, il existe une forme $\phi \in I^2_qF$ anisotrope de dimension $2n+2$ telle que $A=D_{\phi}$ (proposition \ref{propdec}). De plus, on a $I^3_qK=0$. 

(1) Supposons que $K$ soit de degr\'e de s\'eparabilit\'e $\leq 2$ ou triquadratique de degr\'e de s\'eparabilit\'e $4$, ce qui revient \`a dire $s=0$ ou ($r\geq 1$ et $s=1$) ou  ($r=1$ et $s=2$). Supposons $\ind (D_{\phi})_K=2^{n-k}$.

Par le corollaire \ref{cordec} appliqu\'e au corps $K$ (car $\cd_2(K)\leq 2$), on a $\dim (\phi_K)_{an}=2(n-k)+2$. Par la proposition \ref{p1}, il existe une forme $\phi'\in I^2_qF$ de dimension $2(n-k)+2$ telle que $(\phi_K)_{an}\cong \phi'_K$. On a $C(\phi')\cong M_2(B)$ avec $B$ un produit de $n-k$ alg\`ebres de quaternions. Puisque $\phi \perp \phi'\in {\rm Ker}(I^2_qF\longrightarrow I^2_qK)$, on applique le corollaire \ref{c2} pour avoir 
\[
\phi \perp \phi'\sim \begin{cases} \sum_{i=1}^r \left<\left< a_i\right>\right>_b\otimes \phi_i & \text{si } s=0,\\\sum_{i=1}^r \left<\left<a_i\right>\right>_b\otimes \phi_i + \sum_{j=1}^s\rho_j \otimes [1,b_j]& \text{si } (r\geq 1 \;\text{et}\; s=1)\; \text{ou}\;  (r=1 \;\text{et}\; s=2),\end{cases}
\]
pour des formes convenables $\phi_1, \cdots, \phi_r\in W_q(F)$ et $\rho_1,\cdots, \rho_s \in IF$. En passant \`a l'alg\`ebre de Clifford, on obtient dans $\operatorname{Br}(F)$: 
\[
C(\phi) + C(\phi')\sim \begin{cases} \otimes_{i=1}^r [c_i, a_i) & \text{si } s=0,\\(\otimes_{i=1}^r [c_i, a_i))\otimes(\otimes_{j=1}^s[b_j,d_j))& \text{ si } (r\geq 1 \;\text{et}\; s=1)\; \text{ou}\;  (r=1 \;\text{et}\; s=2),\end{cases}
\] pour des scalaires convenables $c_1, \cdots, c_r \in F$ et $d_1, \cdots, d_s\in F^{\times}$. Puisque $B$ est un produit de $n-k$ alg\`ebres de quaternions, on obtient par comparaison des dimensions que $A\cong B\otimes C$, o\`u l'alg\`ebre $C$ est produit de $k$ alg\`ebres de quaternions donn\'ee par:
\[
C=\begin{cases} \otimes_{i=1}^r [c_i, a_i) & \text{si } s=0,\\ (\otimes_{i=1}^r [c_i, a_i))\otimes(\otimes_{j=1}^s[b_j, d_j)) & \text{ si }  (r\geq 1 \;\text{et}\; s=1)\; \text{ou}\;  (r=1 \;\text{et}\; s=2).
\end{cases}
\]
D'o\`u, $A$ admet une d\'ecomposition adapt\'ee \`a $K$.
\medskip

(2) Supposons que $K$ soit de degr\'e de s\'eparabilit\'e $4$, $[K:F]=2^n$ et $A_K\sim 0$. On obtient que $\phi_K$ est hyperbolique (corollaire \ref{cordec}). Par le corollaire \ref{c2}, $\phi \sim \sum_{i=1}^{r}\left<\left<a_i\right>\right>_b\otimes\phi_i + \rho_1\otimes [1, b_1] + \rho_2\otimes [1, b_2]$ pour des formes convenables $\phi_1, \cdots, \phi_{r} \in W_q(F)$ et $\rho_1, \rho_2 \in IF$. On conclut comme dans (1) en passant par l'alg\`ebre de Clifford.

\section{Un contre-exemple \`a la d\'ecomposition adapt\'ee}
Notre but dans cette section est de donner une preuve de la proposition \ref{pcex}. Pour cela on utilisera l'article \cite{LS} de Leep et Smith consacr\'e \`a l'\'etude du noyau de Witt des extensions triquadratiques en caract\'eristique diff\'erente de $2$. Plus exactement, pour une extension triquadratique $F_0(\sqrt{a}, \sqrt{b}, \sqrt{c})$ d'un corps $F_0$ de caract\'eristique diff\'erente de $2$, le r\'esultat \cite[Lemma 4.1]{LS} donne une m\'ethode  pour construire des $F_0(\sqrt{a})$-formes quadratiques de dimension $4$ qui ont la propri\'et\'e d'\^etre d\'efinies sur $F$ et hyperboliques sur $F_0(\sqrt{a}, \sqrt{b}, \sqrt{c})$. Ce r\'esultat clarifie l'exemple \cite[Exemple 5.8(v)]{ELTW} qui a motiv\'e la proposition \ref{pcex} (voir le commentaire \cite[Page 254]{LS}). On commence par \'etablir l'analogue en caract\'eristique $2$ de \cite[Lemma 4.1]{LS}: 

\begin{prop} Soit $M=F(\wp^{-1}(a), \wp^{-1}(b), \wp^{-1}(c))$ une extension triquadratique s\'eparable de $F$, et $K=F(\wp^{-1}(a))$. Posons $\alpha=\wp^{-1}(a)$ et soit $\phi=\beta[1, b] \perp \gamma[1, c]$ une $K$-forme quadratique, o\`u $\beta=s+t \alpha$ et $\gamma= u+v\alpha$ sont des \'el\'ements de $K^{\times}$ avec $s, t, u, v \in F$. Alors, les deux assertions suivantes sont \'equivalentes:\\(1) $\phi$ est d\'efinie sur $F$.\\(2) $\left<\left<N_{K/F}(\beta), b\right]\right]\cong  \left<\left<N_{K/F}(\gamma), c\right]\right]$, et si $tv\neq 0$, alors $tv \in D_F(\left<\left<N_{K/F}(\beta), b\right]\right])$, o\`u $N_{K/F}$ d\'esigne la norme relative \`a l'extension $K/F$.
\label{pcex2}
\end{prop}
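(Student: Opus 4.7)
The plan is to analyze $\phi$ via the Scharlau transfer $s_*: W_q(K) \to W_q(F)$ attached to the $F$-linear map $s: K \to F$ with $s(1) = 0$ and $s(\alpha) = 1$. The central computation is that of $s_*(\beta[1,b])$: by the projection formula (since $[1,b]$ is defined over $F$), $s_*(\beta[1,b]) = s_*(\left<\beta\right>_b) \otimes [1,b]$, and a direct Gram matrix calculation for $s_*(\left<\beta\right>_b)$ in the $F$-basis $(1,\alpha)$ of $K$ yields the symmetric $2\times 2$ matrix with rows $(t,\, s+t)$ and $(s+t,\, s+t+at)$, of determinant $s^2 + st + at^2 = N_{K/F}(\beta)$. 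For $t \ne 0$ the obvious change of basis (killing the off-diagonal entry by $1 \mapsto 1 + \frac{s+t}{t}\alpha$) diagonalizes this to $\left<t,\, N_{K/F}(\beta)/t\right>_b$, whence $s_*(\beta[1,b]) \cong t\,\left<\left<N_{K/F}(\beta),b\right]\right]$ in $W_q(F)$ (the factor $1/t^2$ being absorbed by the round property of $[1,b]$). For $t = 0$, the Gram matrix reduces to $\bigl(\begin{smallmatrix}0&1\\1&1\end{smallmatrix}\bigr)$, which is metabolic, so $s_*(\beta[1,b]) = 0$. The analogous statement holds for $s_*(\gamma[1,c])$ with $(v,\, N_{K/F}(\gamma),\, c)$ replacing $(t,\, N_{K/F}(\beta),\, b)$.

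For the direction $(1) \Rightarrow (2)$, suppose $\phi \cong \psi_K$ for some $\psi \in W_q(F)$. Writing out $s_*(\psi_K)$ in a basis of the form $(v_i \otimes 1,\, v_i \otimes \alpha)_i$, one sees that the subspace $V \otimes 1$ is totally isotropic of dimension $\dim \psi$, which makes $s_*(\psi_K) = 0$ in $W_q(F)$. Combined with the transfer computation,
\begin{equation*}
t\,\left<\left<N_{K/F}(\beta),\,b\right]\right] + v\,\left<\left<N_{K/F}(\gamma),\,c\right]\right] = 0 \quad \text{in } W_q(F),
\end{equation*}
with the convention that the $t=0$ (resp.\ $v=0$) summand is replaced by $0$. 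A case analysis on $(t,v)$ yields (2). Whenever one of $t,v$ vanishes, the corresponding $N_{K/F}$ is a square, so the associated Pfister form equals $\left<\left<1,\cdot\right]\right]$, which is hyperbolic; the relation then forces both Pfister forms to be hyperbolic, and $tv=0$ makes the norm condition vacuous. In the principal case $t,v \ne 0$, one gets an isometry $t\pi_1 \cong v\pi_2$ between $4$-dimensional forms (writing $\pi_1 := \left<\left<N_{K/F}(\beta),b\right]\right]$, $\pi_2 := \left<\left<N_{K/F}(\gamma),c\right]\right]$), since Witt equivalent forms of matching anisotropic dimension are isometric. Invoking the classical fact that any two similar $n$-fold Pfister forms are isometric (both represent $1$, so the similitude factor is itself a represented value), one extracts $\pi_1 \cong \pi_2$ together with $v/t \in D_F(\pi_1)$, equivalent to $tv \in D_F(\pi_1)$.

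For the direction $(2) \Rightarrow (1)$, I construct $\psi$ explicitly. When $t = v = 0$ take $\psi = s[1,b] \perp u[1,c]$ and the result is trivial. The mixed cases $t=0 \ne v$ (or vice versa) are handled using the hyperbolicity of the relevant Pfister form provided by (2): this gives a concrete norm identity from which $\beta[1,b]$ or $\gamma[1,c]$ individually descends to $F$, and one glues. The main case $t,v \ne 0$ is tackled by reverse-engineering the transfer calculation: fix a representation $tv = \pi_1(\xi)$ (possible since $tv \in D_F(\pi_1)$) and combine it with the isometry $\pi_1 \cong \pi_2$ to produce scalars $p_1, q_1, p_2, q_2 \in F$ so that the $F$-form $\psi := p_1[1,q_1] \perp p_2[1,q_2]$ has Arf invariant $q_1+q_2$ matching Arf$(\phi)= b+c$ modulo $\wp(K)$ and Clifford algebra $[q_1,p_1)\otimes[q_2,p_2)$ whose scalar extension to $K$ is $[b,\beta)\otimes[c,\gamma)$. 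Since nonsingular $4$-dimensional quadratic forms in characteristic $2$ are classified up to isometry by their Arf invariant, Clifford invariant, and a common represented value, one concludes $\psi_K \cong \phi$.

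The main obstacle will be the direction $(2) \Rightarrow (1)$: producing the correct scalars $p_i,q_i$ and, above all, upgrading the abstract membership $\phi \in \mathrm{Im}(W_q(F) \to W_q(K))$ (which follows formally from $s_*(\phi)=0$ and exactness of the Arason--Baeza sequence) into an actual isometry $\phi \cong \psi_K$. The norm condition $tv \in D_F(\pi_1)$ plays the decisive role here: it is precisely what rules out an unwanted hyperbolic summand and pins $\psi_K$ down to $\phi$ on the nose rather than merely up to Witt equivalence.
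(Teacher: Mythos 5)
Your setup and the whole direction $(1)\Rightarrow(2)$ coincide with the paper's proof: same transfer $s_*$ attached to $s(1)=0$, $s(\alpha)=1$, same Gram-matrix computation giving $s_*(\left<\beta\right>_b)\cong t\left<1,N_{K/F}(\beta)\right>_b$ for $t\neq 0$ and $0$ for $t=0$, Frobenius reciprocity, and the same case analysis via multiplicativity of Pfister forms. That part is correct.

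The problem is $(2)\Rightarrow(1)$ in the main case $tv\neq 0$. Your plan --- exhibit scalars $p_i,q_i$ by hand and invoke a classification of nonsingular $4$-dimensional forms by Arf invariant, Clifford invariant and a common represented value --- is never carried out: the scalars are not produced, the matching of $C(\psi_K)$ with $[b,\beta)\otimes[c,\gamma)$ is not verified, and you yourself flag this as ``the main obstacle''. You also misattribute the role of the norm condition: it is not what pins down the isometry class of $\psi_K$; it is what makes the transfer vanish. Writing $\pi_1=\left<\left<N_{K/F}(\beta),b\right]\right]$ and $\pi_2=\left<\left<N_{K/F}(\gamma),c\right]\right]$, the hypothesis $tv\in D_F(\pi_1)$ gives $v/t=tv/t^2\in D_F(\pi_1)$, hence $(v/t)\pi_1\cong\pi_1$ because a Pfister form is round, so $t\pi_1\cong v\pi_1\cong v\pi_2$ and therefore $s_*(\phi)\cong t\pi_1\perp v\pi_2\sim 0$. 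The step you leave open --- upgrading $s_*(\phi)\sim 0$ to an actual isometry $\phi\cong\psi_K$ rather than a Witt equivalence --- is standard for a separable quadratic extension and needs no construction: exactness of $W_q(F)\rightarrow W_q(K)\xrightarrow{s_*}W_q(F)$ gives $\phi\sim\psi_K$ with $\psi$ anisotropic over $F$, and the excellence of $K/F$ (\cite[Lemma 5.4]{HL}, already used elsewhere in the paper) gives $\phi_{an}\cong(\psi_K)_{an}\cong\tau_K$ for some $F$-form $\tau$, whence $\phi$ is defined over $F$ after adding hyperbolic planes. This is the one-line argument the paper uses, and it replaces your entire ``reverse-engineering'' step; as it stands, your proof of $(2)\Rightarrow(1)$ is incomplete.
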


\begin{proof}
On reprend les m\^emes arguments donn\'es dans \cite{LS}. Soit $s:K\longrightarrow F$ l'application $F$-lin\'eaire donn\'ee par: $s(1)=0$ et $s(\alpha)=1$. Soit $s_*$ l'application transfert induite par $s$ au niveau de l'anneau de Witt (ou groupe de Witt). On rappelle le calcul suivant donn\'e dans $W(F)$:
\[s_*(\left<\beta\right>_b)\cong \begin{cases} 0 & \text{si}\; t=0\\t\left<1, N_{K/F}(\beta)\right>_b & \text{si}\; t\neq 0,
\end{cases}\]et de mani\`ere analogue on a
\[s_*(\left<\gamma\right>_b)\cong \begin{cases} 0& \text{si}\; v=0\\v\left<1, N_{K/F}(\gamma)\right>_b & \text{si}\; v\neq 0.
\end{cases}\]
En utilisant la r\'eciprocit\'e de Frobenius, on obtient dans $W_q(F)$:$$s_*(\phi) \cong s_*(\left<\beta\right>_b)\otimes [1, b] \perp s_*(\left<\gamma\right>_b)\otimes [1, c].$$

(1) $\Longrightarrow$ (2) Supposons que $\phi$ soit d\'efinie sur $F$. Alors, $s_*(\phi)=0$ et donc $s_*(\left<\beta\right>_b)\otimes [1, b] \cong s_*(\left<\gamma\right>_b)\otimes [1, c]$.
\begin{itemize}
\item Supposons $tv=0$. On peut supposer $t=0$. Alors, $\beta \in F$ et $s_*(\left<\beta\right>_b)\sim 0 \sim\left<1, N_{K/F}(\beta)\right>_b$ (car $N_{K/F}(\beta)\in F^2$). En particulier, $s_*(\left<\gamma\right>_b)\otimes [1, c]\sim 0 \sim \left<1, N_{K/F}(\gamma)\right>_b\otimes [1, c]$. Ainsi, $\left<\left< N_{K/F}(\beta), b\right]\right]\cong  \left<\left< N_{K/F}(\gamma), c\right]\right]$.

\item Si $tv \neq 0$. Alors, on obtient $t\left<1, N_{K/F}(\beta)\right>_b\otimes [1, b] \cong v\left<1, N_{K/F}(\gamma)\right>_b\otimes [1, c]$. Par la multiplicativit\'e d'une forme de Pfister, on d\'eduit $\left<\left<N_{K/F}(\beta), b\right]\right] \cong \left<\left<N_{K/F}(\gamma), c\right]\right]$ et $tv \in D_F(\left<\left<N_{K/F}(\beta), b\right]\right])$.
\end{itemize}

(2) $\Longrightarrow$ (1) Supposons que les conditions de (2) soient v\'erifi\'ees.

Supposons que $t=0$ (le m\^eme argument s'applique pour le cas $v=0$). Alors, $\beta \in F$ et $\left<\left<N_{K/F}(\beta), b\right]\right] \sim 0 \sim\left<\left<N_{K/F}(\gamma), c\right]\right]=s_*(\gamma [1,c])$. Par cons\'equent, $\gamma [1,c]$ est d\'efinie sur $F$ et donc $\phi$ l'est aussi.

Supposons $tv\neq 0$. Alors, la condition $tv \in D_F(\left<\left<N_{K/F}(\beta), b\right]\right])$ implique que $t\left<\left<N_{K/F}(\beta), b\right]\right]\cong  v\left<\left<N_{K/F}(\gamma), c\right]\right]$. Par cons\'equent, $s_*(\phi)=0$ et donc $\phi$ est d\'efinie sur $F$. \qed
\end{proof}

\medskip

On donne un lemme pr\'eliminaire. 
\begin{lem}
Soient $x$ une ind\'etermin\'ee sur $F$ et $u$ une unit\'e pour la valuation $x$-adique du corps des fractions rationnelles $F(x)$. Alors, pour tout $\epsilon \in \Z$ impair, la forme $[1, ux^{\epsilon}]$ est anisotrope sur 
$F(x)$.
\label{lemiso}
\end{lem}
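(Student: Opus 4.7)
The plan is to argue by contradiction using the discrete valuations of $F(x)$ together with a parity consideration. Recall that the binary non-singular form $[1,c]$ is isotropic over $F(x)$ if and only if $c$ lies in the Artin-Schreier image $\wp(F(x))=\{g^2+g\mid g\in F(x)\}$. So I would suppose the existence of $g\in F(x)$ with $g^2+g=ux^\epsilon$ and work towards a contradiction.

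The main ingredient, valid at any discrete valuation $v$ of $F(x)$, is that $v(g^2+g)$ never takes an odd negative value. Indeed, if $v(g)\geq 0$ then $v(g^2+g)\geq 0$, whereas if $v(g)<0$ then $v(g^2)=2v(g)<v(g)$, which forces $v(g^2+g)=2v(g)$, an even integer. Consequently it suffices to exhibit a valuation of $F(x)$ at which $v(ux^\epsilon)$ is odd and negative, which will immediately contradict the existence of $g$.

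First I would take $v$ to be the $x$-adic valuation: since $u$ is a unit for $v$, one has $v(u)=0$ and $v(x)=1$, whence $v(ux^\epsilon)=\epsilon$, which is odd by hypothesis; if $\epsilon<0$ the contradiction is immediate. For $\epsilon>0$, I would apply the same dichotomy at the valuation $v_\infty$ at infinity of $F(x)$ (uniformiser $1/x$): there $v_\infty(x^\epsilon)=-\epsilon$ is odd and negative, and, once one verifies that $v_\infty(u)=0$ from the explicit form of $u$, we obtain $v_\infty(ux^\epsilon)=-\epsilon$, again odd and negative, so the same parity obstruction closes the argument. The only substantive point is the parity computation in characteristic two; everything else is formal, and combining the two cases yields the anisotropy of $[1,ux^\epsilon]$ over $F(x)$.
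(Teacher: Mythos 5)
Your reduction to the condition $ux^{\epsilon}\in\wp(F(x))$ and the parity observation that, at any discrete valuation $v$, $v(g^{2}+g)$ is either $\geq 0$ or an even negative integer, are correct; for $\epsilon<0$ this gives a complete proof, and it is in substance the paper's own argument (the paper writes $g=A/B$ with $A,B$ coprime polynomials and extracts powers of $x$ by hand, which is exactly your parity computation at the $x$-adic place, just unpacked). The problem is the case $\epsilon>0$. There you need $v_{\infty}(u)=0$ and you defer this to ``the explicit form of $u$'' --- but the lemma gives no explicit form: the only hypothesis is that $u$ is a unit at the \emph{$x$-adic} place, and this says nothing about its valuation at infinity (e.g.\ $v_{\infty}(1+x)=-1$). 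This is a genuine gap, not a routine verification.

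Moreover the gap cannot be closed, because the statement is false for $\epsilon>0$ as written: take $u=1+x$ and $\epsilon=1$; then $u$ is an $x$-adic unit and $\epsilon$ is odd, yet $ux^{\epsilon}=x+x^{2}=\wp(x)$, so $[1,ux^{\epsilon}]$ is isotropic (indeed hyperbolic). To be fair, the paper's own treatment of this case --- ``le cas $\epsilon>0$ se traite de la m\^eme fa\c{c}on en travaillant avec la valuation $x^{-1}$-adique'' --- tacitly makes the same unjustified assumption that $u$ stays a unit at the place $x^{-1}$, so your proposal reproduces the paper's argument together with its defect. A correct version of the lemma must add a hypothesis producing some place of $F(x)$ at which $ux^{\epsilon}$ has odd negative valuation (for instance $u\in F^{\times}$, or $v_{\infty}(u)$ even and less than $\epsilon$); one should then check that the element $\zeta b'^{2}$ to which the lemma is applied in the proof of Proposition \ref{lem2ex} satisfies such a condition, which is not automatic when $v_{x}(b')\geq 0$.
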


\begin{proof} Supposons que $\epsilon<0$ et que $[1, ux^{\epsilon}]$ soit isotrope sur $F(x)$. Il existe des polyn\^omes $A, B\in F[x]$ premiers entre eux tels que $A^2+ AB+ ux^{\epsilon}B^2=0$. Ainsi, $x^{-\epsilon}A^2+ x^{-\epsilon}AB= uB^2$. Par cons\'equent, la valuation de $B^2$ est $\geq -\epsilon+1$. En particulier, $x$ divise $B$ et on peut \'ecrire $B=x^{\frac{-\epsilon +1}{2}}C$ pour un certain $C\in F[x]$. Donc, $A^2+ x^{\frac{-\epsilon+1}{2}}AC+ uxC^2=0$. Par cons\'equent, $x$ divise $A$, ce qui est absurde car les polyn\^omes $A$ et $B$ sont suppos\'es premiers entre eux.

Le cas o\`u $\epsilon>0$ se traite de la m\^eme fa\c con en utilisant le fait que $F(x)=F(x^{-1})$ et en travaillant avec la vaulation $x^{-1}$-adique.\qed

\end{proof}
\vskip2mm

Pour le reste de cette section, on prend $F=k(x, y)$ le corps des fractions rationnelles en les variables $x$ et $y$ sur un corps $k$ alg\'ebriquement clos de caract\'eristique $2$. Soit $K=F(\alpha)$, o\`u $\alpha$ est une racine de $Y^2+Y+x\in F[Y]$. On fixe les notations suivantes:\[\tag{$\star$}\begin{cases}
\beta=x\alpha \;\;\text{et}\;\; \gamma= y^{-1}+x\alpha,\\ M=F(\alpha, \wp^{-1}(b), \wp^{-1}(c)),\\a=x,\; b=x^{-2}y^{-1}+1\;\;\text{et}\;\; c=x^{-3}(y^{-2}+xy^{-1}+x^3).
\end{cases}\]
On v\'erifie facilement que l'extension $M/F$ est de degr\'e $8$. Soit $\phi$ la forme quadratique sur $K$ de dimension $4$ donn\'ee par:$$\phi= \beta [1, b] \perp \gamma [1, c].$$

On rappelle une isom\'etrie:
\[\left<\left<r, s\right]\right]\cong \left<\left<r, r+s\right]\right]\; \; \text{pour tous}\; r, s\in F \; \text{avec}\; r\neq 0.
\tag{$\star\star$}
\]

De plus, avec les notations $(\star)$ ci-dessus, on v\'erifie $N_{K/F}(\gamma)=cN_{K/F}(\beta)$ et $b=c+N_{K/F}(\beta)^{-1}y^{-2}$. Ainsi, on obtient
\begin{eqnarray*}
\left<\left<N_{K/F}(\gamma), c\right]\right] & = & \left<\left<cN_{K/F}(\beta), c\right]\right]\\ & \cong & \left<\left<N_{K/F}(\beta), c\right]\right]\:\;\;\; (\text{car}\; c[1, c]\cong [1, c])\\ & \cong & \left<\left<N_{K/F}(\beta)^{-1}y^{-2}, c\right]\right]\\ & \cong & \left<\left<N_{K/F}(\beta)^{-1}y^{-2}, c+N_{K/F}(\beta)^{-1}y^{-2}\right]\right]\:\;\;\; (\text{par }\; (\star\star))\\ & \cong & \left<\left<N_{K/F}(\beta), b\right]\right].
\end{eqnarray*}

Comme $x^2\in \left<\left<N_{K/F}(\beta), b\right]\right]$, alors toutes les conditions de l'assertion (2) de la proposition \ref{pcex2} sont v\'erifi\'ees, et par cons\'equent $\phi$ est d\'efinie sur $F$. Soit $\psi$ une $F$-forme de dimension $4$ telle que $\phi \cong \psi_K$.

\medskip

On a la proposition suivante:
\begin{prop}
La $F$-forme quadratique $\psi$ est hyperbolique sur $M$ mais n'appartient pas  \`a 
$
W(F)\otimes [1, a] + W(F)\otimes [1, b] + W(F)\otimes [1, c].
$
\label{lem2ex}
\end{prop}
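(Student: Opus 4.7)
La premi\`ere assertion est imm\'ediate. Par construction, $b$ et $c$ appartiennent \`a $\wp(M)$, donc les formes d'Artin-Schreier $[1,b]_M$ et $[1,c]_M$ sont hyperboliques ; ainsi $\phi_M = \beta[1,b]_M\perp \gamma[1,c]_M$ est hyperbolique, et par cons\'equent $\psi_M\simeq\phi_M$ l'est aussi.

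Pour la seconde assertion, je proc\`ede par l'absurde : supposons $\psi\sim B_1\otimes [1,a]+B_2\otimes [1,b]+B_3\otimes [1,c]$ dans $W_q(F)$ pour certaines formes bilin\'eaires $B_i$ sur $F$. L'extension des scalaires \`a $K=F(\alpha)$ annule $[1,a]$ car $a\in\wp(K)$, donc en utilisant $\phi=\beta[1,b]\perp \gamma[1,c]$ on obtient
\[C\otimes [1,b]+D\otimes [1,c]\sim 0\quad\text{dans }W_q(K),\]
o\`u $C=\left<\beta\right>_b\perp (B_2)_K$ et $D=\left<\gamma\right>_b\perp (B_3)_K$. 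Un argument \'el\'ementaire utilisant la valuation $y$-adique sur $F=k(x,y)$ montre que $b$, $c$ et $b+c=x^{-3}y^{-2}$ sont tous hors de $\wp(K)$, car $x^{-3}$ n'est pas un carr\'e dans $k(x)$. L'examen de l'invariant d'Arf de la relation ci-dessus force alors $C$ et $D$ \`a avoir une dimension paire, de sorte que $C\otimes [1,b]+D\otimes [1,c]\in I^2_qK$.

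Comme $\cd_2(F)\le 2$, le lemme \ref{l1} donne $I^3_qK=0$, donc l'isomorphisme de Kato $e_2\colon I^2_qK\xrightarrow{\sim} H_2^{2}(K)$ s'applique. Apr\`es diagonalisation de $B_2,B_3$ et en utilisant $\left<r\right>_b=\left<\left<r\right>\right>_b+\left<1\right>_b$ dans $W(K)$, l'annulation se traduit par l'identit\'e dans $\Omega_K^1$ :
\[b\cdot\frac{d\mu}{\mu}+c\cdot\frac{d\nu}{\nu}\in\wp(\Omega_K^1)+dK,\]
avec $\mu=\beta\prod_i r_i\in K^{\times}$ et $\nu=\gamma\prod_j s_j\in K^{\times}$, o\`u $r_i,s_j\in F^{\times}$ sont les entr\'ees diagonales de $B_2,B_3$.

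Le point d\'elicat sera de contredire cette derni\`ere identit\'e. Le plan est de l'analyser via la filtration par r\'esidus sur $\Omega_K^1$ associ\'ee \`a un prolongement de la valuation $y$-adique de $F$, qui descend \`a $H_2^{2}(K)$ gr\^ace aux r\'esultats d'Aravire-Baeza \cite{AB3}. Le choix des scalaires $\beta=x\alpha$, $\gamma=y^{-1}+x\alpha$, $b=1+x^{-2}y^{-1}$, $c=1+x^{-2}y^{-1}+x^{-3}y^{-2}$ est con\c cu pour qu'un calcul de r\'esidus produise un terme obstructif impliquant $d\alpha/\alpha=dx/\alpha$ (puisque $d\alpha=dx$ en caract\'eristique $2$), qui ne peut \^etre annul\'e ni par la contribution de $b\cdot d\mu/\mu$ ni par des \'el\'ements de $\wp(\Omega_K^1)+dK$. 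Ce calcul de r\'esidus, qui adapte \`a la caract\'eristique $2$ la strat\'egie de Leep-Smith \cite{LS}, produit la contradiction souhait\'ee.
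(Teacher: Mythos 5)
Your proof of the first assertion is correct and coincides with the paper's: since $b$ and $c$ lie in $\wp(M)$, the form $\phi_M$, hence $\psi_M$, is hyperbolic. For the second assertion, however, there is a genuine gap. Your reduction is sound as far as it goes: extending scalars to $K$ kills $[1,a]$; the Arf-invariant parity argument is valid because $b$, $c$ and $b+c=x^{-3}y^{-2}$ all lie outside $\wp(K)$; and since $I^3_qK=0$, Kato's isomorphism turns the assumed relation into the vanishing of $\overline{b\frac{d\mu}{\mu}+c\frac{d\nu}{\nu}}$ in $H_2^2(K)$, where $\mu\in\beta F^{\times}$ and $\nu\in\gamma F^{\times}$ are unknown. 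But this nonvanishing statement is exactly where the entire difficulty of the proposition is concentrated, and your argument stops there: the residue computation that is supposed to produce the contradiction is only announced as a plan, never carried out. In particular, nothing is said about how residues would dispose of the arbitrary factors $\prod_i r_i$ and $\prod_j s_j$ coming from the unknown bilinear forms $B_2$, $B_3$ --- that is the crux --- and the appeal to \cite{AB3} for a ``residue filtration on $\Omega^1_K$ descending to $H_2^2(K)$'' is misdirected, since that reference concerns annihilators of quadratic and bilinear forms, not residue maps.

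For comparison, the paper eliminates the unknown quantities \emph{before} any valuation-theoretic computation, and this elimination is the heart of its proof: (i) it passes to $L=K(\wp^{-1}(x^{-3}y^{-2}))$, where, because $b+c=x^{-3}y^{-2}$ and $1\in\wp(k)$, both $[1,b]$ and $[1,c]$ become $[1,x^{-2}y^{-1}]$, merging your two symbols into a single one; (ii) it transfers down to $E=F(\wp^{-1}(x^{-3}y^{-2}))$ via the $E$-linear map with $s(1)=0$, $s(\alpha)=1$, which removes $\alpha$ and the unknown bilinear form and leaves the concrete relation $(\left<1,\zeta\right>_b\otimes[1,x^{-2}y^{-1}])_E\sim 0$ with $\zeta=xy(1+y^{2}x^{3}+xy)$; (iii) it applies \cite[Proposition 4.3]{AB3} to convert this into $x^{-2}y^{-1}\equiv \zeta A^2 \pmod{\wp(E)}$ for some $A\in E^{\times}$; (iv) it shows $A\in F$ by a trace transfer combined with the anisotropy lemme \ref{lemiso}; and only then (v) does it compute residue forms over $k(x)((y))$, where all entries are now explicit, to reach a contradiction. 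Steps (i)--(iv) have no counterpart in your plan, so what remains unproved in your proposal is not a routine verification but the essential content of the proposition.
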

\begin{proof}
Soit $L=F(\alpha, \wp^{-1}(x^{-3}y^{-2}))$. Comme $k$ est alg\'ebriquement clos, on a $1\in \wp(k)$, c'est-\`a-dire, $[1, 1]$ est hyperbolique. Il est clair que $[1, c]_L\cong [1, x^{-2}y^{-1}]_L$ puisque $[1, x^{-3}y^{-2}+1]_L\sim 0$. 

L'hyperbolicit\'e de $\psi_M$ est claire car $\psi_M\cong (\psi_K)_M\cong \phi_M\sim 0$. Supposons que $\psi \in W(F)\otimes [1, a] + W(F)\otimes [1, b] + W(F)\otimes [1, c]$. Puisque $\phi \cong \psi_K$ et $[1, c]_L\cong [1, x^{-2}y^{-1}]_L$, on obtient 
\begin{equation}
(\left<\beta, \gamma\right>_b\otimes[1, x^{-2}y^{-1}])_{L}\sim (\rho\otimes [1, x^{-2}y^{-1}])_L
\label{eq1ex}
\end{equation}
pour une certaine $F$-forme bilin\'eaire $\rho$. 

Posons $E=F(\wp^{-1}(x^{-3}y^{-2}))$. Soit $s:L\longrightarrow E$ l'application $E$-lin\'eaire donn\'ee par: $s(1)=0$ et $s(\alpha)=1$. On v\'erifie qu'on a dans $W(E)$ (les calculs sont faits relativement \`a la $E$-base $\{1, \alpha\}$ de $L$):
\vskip1.5mm
(1) $s_*(\left<1\right>_b)\sim 0$ car $s_*(\left<1\right>_b)$ est une forme bilin\'eaire de dimension $2$ isotrope puisque $s(1)=0$. 
\vskip1.5mm
(2) $s_*(\left<\beta\right>_b)=\begin{pmatrix} x & x\\ x & x+x^2\end{pmatrix}\cong \left<1, x\right>_b$.
\vskip1.5mm
(3) $s_*(\left<\gamma\right>_b)=\begin{pmatrix} x & x+y^{-1}\\x+y^{-1} & x+x^2+y^{-1}\end{pmatrix}\cong \left<x, y^{-1}+x^2+x^{-1}y^{-2}\right>_b$.
\vskip1.5mm
Ainsi, en appliquant \`a l'\'equation (\ref{eq1ex}) le transfert $s_*$ induit par $s$, et en utilisant la r\'eciprocit\'e de Frobenius, on obtient:

\begin{equation}
(\left<1, x, x, y^{-1}+x^2+x^{-1}y^{-2}\right>_b\otimes [1, x^{-2}y^{-1}])_{E} \sim s_*(\left<1\right>_b)\otimes\rho[1, x^{-2}y^{-1}]_E\sim 0.
\label{eq2ex}
\end{equation}

Comme $\left<x, x\right>_b$ est un plan m\'etabolique, il est facile de voir que l'\'equation (\ref{eq2ex}) s'\'ecrit:

\begin{equation}
(\left<1, x(1+y^{2}x^3+xy)\right>_b\otimes [1, x^{-2}y^{-1}])_{E} \sim 0.
\label{eq3ex}
\end{equation}

En utilisant l'isom\'etrie $k[1, l]\cong kl[1,l]$, l'\'equation (\ref{eq3ex}) s'\'ecrit

\begin{equation}
(\left<1, xy(1+y^{2}x^3+xy)\right>_b\otimes [1, x^{-2}y^{-1}])_{E} \sim 0.
\label{eq4ex}
\end{equation}

Posons $\zeta=x y(1+y^{2}x^3+xy)$. Notons que $\zeta$ n'est pas un carr\'e dans $F$. Puisque $E/F$ est s\'eparable, alors $\zeta$ n'est pas un carr\'e dans $E$. Par cons\'equent, $\frac{d \zeta}{\zeta}\neq 0$ dans $\nu_E(1)$. En utilisant l'isomorphisme (\ref{Katoiso}), l'isom\'etrie dans (\ref{eq3ex}) se traduit par la relation $\overline{x^{-2}y^{-1}\frac{d \zeta}{\zeta}}=0$ dans $H_2^{2}(E)$. Autrement dit, $\overline{x^{-2}y^{-1}}\in E/\wp(E)$ appartient \`a l'annulateur du symbole $\frac{d \zeta}{\zeta}$ d\'efini par:$${\rm annq}_0(\frac{d \zeta}{\zeta}):=\left\{ \overline{z}\in E/\wp(E)\mid \overline{z\frac{d \zeta}{\zeta}}=0\in H_2^{2}(E)\right\}.$$ Cet annulateur a \'et\'e \'etudi\'e dans \cite{AB3} pour les deux types de symboles (logarithmique et quadratique). En particulier, dans notre cas on a d'apr\`es \cite[Proposition 4.3]{AB3} $\overline{x^{-2}y^{-1}}=\overline{z}\in E/\wp(E)$, o\`u $z=\zeta.A^2$ pour $A\in E^{\times}$ convenable. Ainsi, on a l'isom\'etrie suivante sur le corps $E$:
\begin{equation}
[1, x^{-2}y^{-1}]\cong [1, \zeta.A^2].
\label{eq5ex}
\end{equation}

On affirme que $A\in F$. En effet, on \'ecrit $A=a'+b'\delta$ pour $a', b'\in F$, o\`u $\delta=\wp^{-1}(x^{-3}y^{-2})$. On consid\`ere le transfert ${\rm Tr}_*$ induit par l'application trace de $E/F$. En appliquant ${\rm Tr}_*$ \`a l'isom\'etrie $[1, x^{-2}y^{-1}]\cong [1, \zeta.A^2]$, on d\'eduit que ${\rm Tr}_*([1, \zeta.A^2])=0$ car $[1, x^{-2}y^{-1}]$ est d\'efinie sur $F$. De plus, par \cite[Lemma 34.14]{EKM}, on a ${\rm Tr}_*([1, \zeta.A^2])\equiv [1, {\rm Tr}(\zeta.A^2)]\pmod{I^2_qF}$. On obtient par le Hauptsatz que $[1, {\rm Tr}(\zeta.A^2)]=[1, \zeta. b^{'2}]$ est hyperbolique. Ainsi, on a n\'ecessairement $b'=0$ par le lemme \ref{lemiso} en consid\'erant la valuation $x$-adique de $F$.

Notons que la forme $[1, x^{-2}y^{-1} +\zeta.A^2]$ est anisotrope sur $F$. Sinon, on aurait $[1, x^{-2}y^{-1}]\cong [1, \zeta.A^2]$ et ainsi, par la multiplicativit\'e d'une forme de Pfister, on obtiendrait
\begin{equation}
[1, x^{-2}y^{-1}] \cong \zeta [1, x^{-2}y^{-1}] =  xy(1+y^{2}x^3+xy)[1, x^{-2}y^{-1}] \cong  x(1+y^{2}x^3+xy)[1, x^{-2}y^{-1}].
\label{eqnv1}
\end{equation}

On \'etend l'\'equation (\ref{eqnv1}) au corps $k(x)((y))$ et on prend la valuation $y$-adique. Par le cas (B), la premi\`ere forme r\'esiduelle de $[1, x^{-2}y^{-1}]$ est $\left<1\right>$ alors que celle de $x(1+y^{2}x^3+xy)[1, x^{-2}y^{-1}]$ est $\left< x\right>$, ce qui est absurde.

Puisque $A\in F$ et que la forme $[1, x^{-2}y^{-1}+ \zeta .A^2]$ est anisotrope sur $F$, on d\'eduit de l'\'equation (\ref{eq5ex}) l'isom\'etrie suivante sur $F$:

\begin{equation}
[1, x^{-3}y^{-2} + \zeta.A^2]\cong [1, x^{-2}y^{-1}].
\label{eq6ex}
\end{equation}

On \'etend l'\'equation (\ref{eq6ex}) au corps $k(x)((y))$. Cette forme reste anisotrope sur $k(x)((y))$. On pose $A=y^{\epsilon}u$, o\`u $\epsilon \in \Z$ et $u$ est une unit\'e pour la valuation $y$-adique de $k(x)((y))$. Les deux formes r\'esiduelles de $[1, x^{-2}y^{-1}]$ sont isom\'etriques \`a $\left<1\right>$. Pour le membre de gauche, on montre que la premi\`ere (ou la seconde) forme r\'esiduelle n'est pas isom\'etrique \`a $\left<1\right>$, ce qui donne une contradiction. Pour cela on discute sur l'entier $\epsilon$:
\vskip1.5mm

\noindent{\bf Cas 1.} Supposons $\epsilon \geq -1$. Alors, $2\epsilon +3>0$. On a
\begin{eqnarray*}
[1, x^{-3}y^{-2} + \zeta.A^2] &=& [1, y^{-2}(x^{-3}+x(1+y^2x^3+ xy)y^{2\epsilon+3}u^2)].
\end{eqnarray*}
Par le cas (C), la premi\`ere forme r\'esiduelle de $[1, x^{-3}y^{-2} + \zeta.A^2]$ est $\left<1, x\right>$.

\vskip1.5mm

\noindent{\bf Cas 2.} Supposons $\epsilon <-1$. Alors, $-2-(2\epsilon+1)>0$. On a
\begin{eqnarray*}
[1, x^{-3}y^{-2} + \zeta.A^2] &=& [1, y^{2\epsilon +1}(x^{-3}y^{-2-(2\epsilon+1)}+x(1+y^2x^3+ xy)u^2)].
\end{eqnarray*}
Par le cas (B), la seconde forme r\'esiduelle de $[1, x^{-3}y^{-2} + \zeta.A^2]$ est $\left<x \overline{u}^2\right>\cong \left<x\right>$.

Ceci ach\`eve la preuve de la proposition.\qed
\end{proof}

\medskip
On obtient le corollaire suivant:

\begin{cor}
L'extension biquadratique $F(\wp^{-1}(a), \wp^{-1}(b))/F$ n'est pas excellente.
\label{corexce}
\end{cor}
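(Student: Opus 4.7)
Je procéderais par l'absurde. Supposons l'extension biquadratique séparable $E := F(\wp^{-1}(a), \wp^{-1}(b))$ excellente sur $F$, et appliquons cette hypothèse à la forme $\psi \in W_q(F)$ construite dans la proposition \ref{lem2ex}: on obtient une $F$-forme $\eta$ telle que $(\psi_E)_{an} \cong \eta_E$, d'où $\psi \perp \eta \in \ker(W_q(F) \to W_q(E))$. Comme $\psi_M$ est hyperbolique d'après la proposition \ref{lem2ex}, et que $\eta_E \sim \psi_E$ en tant que classes de Witt, on a également $\eta_M \sim 0$, donc $\eta \in \ker(W_q(F) \to W_q(M))$.

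L'étape cruciale consistera à établir les deux descriptions
\begin{align*}
\ker(W_q(F) \to W_q(E)) &= W(F) \otimes [1,a] + W(F) \otimes [1,b], \\
\ker(W_q(F) \to W_q(M)) &= W(F) \otimes [1,a] + W(F) \otimes [1,b] + W(F) \otimes [1,c].
\end{align*}
Pour le premier, le corollaire \ref{c2} (appliqué avec $n = 0$ et $m = 1$) donne exactement ce noyau restreint à $I^2_q F$. Pour l'étendre à $W_q(F)$ tout entier, j'utiliserais l'invariant d'Arf: toute forme $\chi$ du noyau a pour Arf un élément du noyau de $F/\wp(F) \to E/\wp(E)$, lequel est engendré par $\bar a, \bar b$ d'après la théorie d'Artin-Schreier; ajouter orthogonalement une combinaison $\epsilon_a[1,a] \perp \epsilon_b[1,b]$ convenable ramène alors $\chi$ dans $I^2_q F$, sans sortir du noyau. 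Le second noyau se traiterait de manière analogue, en invoquant l'analogue triquadratique (à trois générateurs séparables d'Artin-Schreier) du théorème \ref{p2}, lequel se déduit des mêmes arguments cohomologiques sur les formes différentielles utilisés dans \cite{AL, ALO}.

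Il ne resterait plus qu'à sommer: la forme $\psi = (\psi \perp \eta) \perp \eta$ appartiendrait alors à $W(F) \otimes [1,a] + W(F) \otimes [1,b] + W(F) \otimes [1,c]$, ce qui contredit la conclusion de la proposition \ref{lem2ex}. La principale difficulté technique résidera dans la justification de cet analogue triquadratique du théorème \ref{p2}, routinier par les méthodes cohomologiques mais non explicitement énoncé dans l'article.
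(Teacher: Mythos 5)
Your overall strategy (reduce everything to a description of the Witt kernels of $E/F$ and $M/F$ and then contradict Proposition \ref{lem2ex}) begins in parallel with the paper's, and the biquadratic half is sound: $\ker(W_q(F)\to W_q(E)) = W(F)\otimes[1,a]+W(F)\otimes[1,b]$ is Baeza's result \cite[Corollary 4.16]{B1}, which is exactly what the paper invokes. But your ``\'etape cruciale'' for the triquadratic extension is false, and it is refuted by the very proposition you rely on: Proposition \ref{lem2ex} exhibits $\psi$ with $\psi_M\sim 0$, i.e.\ $\psi\in\ker(W_q(F)\to W_q(M))$, yet $\psi\notin W(F)\otimes[1,a]+W(F)\otimes[1,b]+W(F)\otimes[1,c]$. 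Hence the equality $\ker(W_q(F)\to W_q(M))=W(F)\otimes[1,a]+W(F)\otimes[1,b]+W(F)\otimes[1,c]$ cannot hold; if it did, $\psi$ itself would already lie in the right-hand side and Proposition \ref{lem2ex} would be self-contradictory, with no excellence hypothesis needed. The existence of such extra elements in triquadratic Witt kernels is precisely the phenomenon studied by Leep--Smith \cite{LS} and is the whole point of this section of the paper; Theorem \ref{p2} only covers extensions of separability degree at most $4$, and there is no ``routine'' triquadratic analogue to invoke.

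The gap is repaired by controlling the dimension of the descended form instead of appealing to the triquadratic kernel, which is what the paper does. Writing $L=F(\wp^{-1}(a),\wp^{-1}(b))$ for your field $E$, one computes $\Delta(\psi_L)=c\in L/\wp(L)$, so $\psi_L$ is not hyperbolic, and since $\psi_M\sim 0$ it is isotropic by \cite[Lemme 4.3]{L02}; hence $(\psi_L)_{an}$ has dimension $2$. If it were defined over $F$, the Arf invariant forces $\psi_L\sim r[1,c]_L$ for some $r\in F^\times$, so $\psi\perp r[1,c]$ lies in the \emph{biquadratic} kernel $W(F)\otimes[1,a]+W(F)\otimes[1,b]$, whence $\psi\in W(F)\otimes[1,a]+W(F)\otimes[1,b]+W(F)\otimes[1,c]$, contradicting Proposition \ref{lem2ex}. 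Only the biquadratic kernel --- a true statement --- is ever needed. (You should also verify that $\psi$ is anisotropic over $F$ before invoking excellence; the paper checks this, again via Proposition \ref{lem2ex}.)
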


\begin{proof} On a $\Delta(\psi_K)=b+c\in K/\wp(K)$. Ainsi, $\Delta(\psi)=\epsilon a+b+c\in F/\wp(F)$ pour $\epsilon=0$ ou $1$. Puisque $[1, a]_K\sim 0$, on peut supposer que $\Delta(\psi)=a+b+c\in F/\wp(F)$. La forme $\psi$ est anisotrope sur $F$ car sinon il existerait $r\in F^{\times}$ tel que $\psi\sim r[1, a+b+c]\sim r[1, a]\perp r[1, b]\perp r[1, c]$, ce qui est exclu par la proposition \ref{lem2ex}. 

Posons $L=F(\wp^{-1}(a), \wp^{-1}(b))$. Comme $M=L(\wp^{-1}(c))$, $\psi_M$ est hyperbolique et $\Delta(\psi_L)=c\in L/\wp(L)$, on d\'eduit que $\psi_L$ est isotrope \cite[Lemme 4.3]{L02}. Suppsons que $(\psi_L)_{an}$ soit d\'efinie sur $F$. Alors, il existe $r\in F^{\times}$ tel que $\psi_L \sim r[1, c]_L$. Ainsi, $(\psi \perp r[1, c])_L\sim 0$. Par \cite[Corollary 4.16]{B1}, il existe $\rho, \rho'$ des $F$-formes bilin\'eaires telles que $\psi \perp r[1, c] \sim \rho\otimes[1, a] \perp \rho'\otimes [1, b]$, ce qui n'est pas possible par la proposition \ref{lem2ex}. Ceci montre que l'extension $L/F$ n'est pas excellente.\qed
\end{proof}
\medskip

Les premiers exemples des extensions biquadratiques qui ne sont pas excellentes ont \'et\'e donn\'es dans \cite[Example 5.8]{ELTW}. En caract\'eristique $2$ la situation est diff\'erente. On sait que les extensions biquadratiques purement ins\'eparables sont excellentes par un r\'esultat de Hoffmann \cite{H}. R\'ecemment, il a \'et\'e prouv\'e dans \cite{LM} que toute extension biquadratique mixte est excellente \cite{LM}. En g\'en\'eral une extension biquadratique s\'eparable n'est pas excellente. Cela a \'et\'e fait dans \cite{LM} en g\'en\'eralisant \`a la caract\'eristique $2$ un exemple de Sivatski. Cette g\'en\'eralisation repose sur un r\'esultat de Rowen donnant l'existence d'une alg\`ebre centrale \`a division ind\'ecomposable de degr\'e $8$ et d'exposant $2$. Le corollaire \ref{corexce} donne un nouveau exemple de la non-excellence des extensions biquadratiques s\'eparables en caract\'eristique $2$.

\medskip

\noindent{\bf Preuve de la proposition \ref{pcex}.} Soit $\psi$ comme dans la proposition \ref{lem2ex}. On consid\`ere la forme $\psi'=\psi \perp [1, \Delta(\psi)]\perp [0,0]$ de dimension $8$ appartenant \`a $I^2_qF$. On a $C(\psi')\cong M_2(A)$ pour $A$ une $F$-alg\`ebre de degr\'e $8$. Puisque $\psi$ est anisotrope (voir la preuve du corollaire \ref{corexce}), l'alg\`ebre $A$ est d'indice $4$ (corollaire \ref{cordec}). Puisque $\psi_M$ est hyperbolique, alors $A_M$ est d\'eploy\'ee. Mais $A$ n'admet pas de d\'ecomposition adapt\'ee \`a $M$ car sinon il existerait une forme $\psi''\in I^2_qF$ v\'erifiant: $\psi''\in W(F)\otimes [1, a] + W(F)\otimes [1, b] + W(F)\otimes [1, c]$ et $C(\psi'')\sim A\sim C(\psi')$. Ainsi, on aurait $\psi' \perp \psi'' \in I^3_qF=0$. Par cons\'equent, $\psi \in W(F)\otimes [1, a] + W(F)\otimes [1, b] + W(F)\otimes [1, c]$, ce qui est exclu par la proposition \ref{lem2ex}.\qed

\medskip

L'alg\`ebre de degr\'e $8$ et d'exposant $2$ construite par Elman-Lam-Tignol-Wadsworth en caract\'eristique diff\'erente $2$ et son analogue en caract\'eristique $2$ que nous donnons dans la proposition \ref{pcex} ont toutes les deux un centre de la forme $k(x,y)$ o\`u $k$ est un corps alg\'ebriquement clos. Il est bien connu que sur de tels corps $k(x,y)$, les alg\`ebres simples centrales d'exposant $2$ sont toutes d'indice au plus $2$. Il devient assez naturel de se poser la question suivante: soient $D$ une alg\`ebre \`a division de degr\'e $8$ et d'exposant $2$ sur un corps $F$ v\'erifiant $\operatorname{cd}_2(F)=2$, et $K/F$ une extension triquadratique s\'eparable telle que $D_K$ soit d\'eploy\'ee.
\begin{ques}
 L'alg\`ebre $D$ admet-elle une d\'ecomposition adapt\'ee \`a $K$?
\end{ques}

\end{sloppypar}

\begin{thebibliography}{99}
 
\bibitem{ART}
S.~A. Amitsur, L.~H. Rowen,  J.-P. Tignol,
\newblock{\it  Division algebras of degree $4$ and $8$ with involution, } Isreal J. Math.,
{\bf 33} (1979), 133--148.

\bibitem{AB2} R. Aravire, R. Baeza, \textit{The behavior of the $\nu$--invariant of a field of characteristic $2$ under finite extensions}, 
Rocky Mountain J. Math, \textbf{19} (1989), 589-600.

\bibitem{AB} R. Aravire, R. Baeza, \newblock{\it Milnor's $K$-Theory and quadratic forms over fields of characteristic two,} Comm. Algebra {\bf 20} (1992), 1087-1107.

\bibitem{AB3} R. Aravire, R. Baeza, \newblock{\it Annihilators of quadratic and bilinear forms over fields of characteristic two} J. Algebra {\bf 299} 2006, 294--308.


\bibitem{AL} R. Aravire, A. Laghribi, {\it  Results on Witt kernels of quadratic forms for multiquadratic extensions in characteristic $2$,} Proc. Amer. Math. Soc. {\bf 141}, no. 12 (2013), 4191--4197.

\bibitem{ALO} R. Aravire, A. Laghribi, M. O'Ryan,  \newblock{\it Cohomological kernels of mixed extensions in characteristic 2,} Journal of Algebra {\bf 542} (2020), 249--276.

\bibitem{B1} R. Baeza, \newblock{\it Quadratic forms over semilocal rings,} LNM 655, Springer-Verlag Berlin Heidelberg New York 1978.

\bibitem{B} D. Barry, \newblock{\it Decomposable and indecomposable algebras of degree $8$ and exponent $2$,} Math. Z. {\bf 276} (2014), 1113--1132.

\bibitem{B16} D. Barry, \newblock{\it Power-central elements in tensor products of symbol algebras, } Com. in Algebra {\bf 44} (2016), 3767 -- 3787.

\bibitem{BC} D. Barry, A. Chapman, \newblock{\it Square-central and Artin-Schreier elements in division algebras,}  Archiv Math. (Basel) {\bf 104} (2015), 513--521.

\bibitem{BCL} D. Barry, A. Chapman, A. Laghribi, \newblock{\it  The descent of biquaternion algebras in characteristic two, } Isreal J. Math.,
{\bf 235} (2020),
295--323.

\bibitem{EKM} R. Elman, N. Karpenko, A. Merkurjev, \newblock{\it The Algebraic and Geometric Theory Quadratic Forms,} Colloq. Publ., {\bf 56}, Am. Math. Soc., 2008.

\bibitem{ELTW} R. Elman, T. Y. Lam, J.-P. Tignol, A. Wadsworth, \newblock{\it Witt Rings and Brauer Groups Under Multiquadratic Extensions I,} Amer. J. Math. {\bf 105} (1983), 1119--1170. 

\bibitem{H} D. W. Hoffmann, \newblock{\it Witt kernels of quadratic forms for multiquadratic extensions in characteristic $2$,} Proc. Am. Math. Soc. {\bf 143}, no. 12 (2015), 5073--5082. 

\bibitem{HL} D. W. Hoffmann, A. Laghribi, \textit{Isotropy of quadratic forms over the function field of a quadric in characteristic 2.}, J. Algebra, \textbf{295} (2006), no. 2, 362-386.

\bibitem{Kah} B.~Kahn, \newblock{\it  Quelques remarques sur le {$u$}-invariant, }
S\'em. Th\'eor. Nombres Bordeaux,
{\bf 2} (1990), 155--161. 
Erratum in: S\'em. Th\'eor. Nombres Bordeaux {\bf 3}, 247,(1991).

\bibitem{K} K. Kato, \newblock{\it Symmetric bilinear forms, quadratic forms and Milnor K-theory in characteristic $2$,} Inventiones Math. {\bf 66} (1982), 493--510.

\bibitem{LS} D. B. Leep, T. L. Smith, \newblock{\it Witt kernels of triquadratic extensions,} Contemporary Mathematics {\bf 344} (2004), 249--256.


\bibitem{L02} A. Laghribi, \textit{Certaines formes quadratiques de dimension au plus 6 et corps des fonctions en caract\'eristique 2}, Israel J. Math., \textbf{129} (2002), 317-361.

\bibitem{MM95} P. Mammone, R. Moresi,  \textit{Formes quadratiques, algčbres ŕ division et extensions multiquadratiques inséparables,} Bull. Belg. Math. Soc. {\bf 2} (1995), 311-319.  

\bibitem{MMW91} P. Mammone, R. Moresi, A. Wadsworth, \textit{$u$-invariants of fields of characteristic $2$,}  Math. Z. {\bf 208} (1991), 335-347.

\bibitem{LM} D. Mukhija, \textit{\it Quelques probl\`emes sur les corps de fonctions des quadriques en caract\'eristique $2$,} Th\`ese de Doctorat de l'Universit\'e d'Artois, d\'ecembre 2021.


\bibitem{S} C. H. Sah, \newblock{\it Symmetric bilinear forms and quadratic forms,} J. Algebra {\bf 20} (1972), 144--160.

\bibitem{T} U.P. Tietze, \newblock{\it Zur Theorie quadratischer Formen \"uber Hensel-K\"orpern,} Arch. Math. {\bf 35} (1974), 144--150.

\end{thebibliography}
\end{document}